\patchcmd\Gread@eps{\@inputcheck#1 }{\@inputcheck"#1"\relax}{}{}
\newcommand{\intav}[1]{\mathchoice {\mathop{\vrule width 6pt height 3 pt depth  -2.5pt
\kern -8pt \intop}\nolimits_{\kern -6pt#1}} {\mathop{\vrule width
5pt height 3  pt depth -2.6pt \kern -6pt \intop}\nolimits_{#1}}
{\mathop{\vrule width 5pt height 3 pt depth -2.6pt \kern -6pt
\intop}\nolimits_{#1}} {\mathop{\vrule width 5pt height 3 pt depth
-2.6pt \kern -6pt \intop}\nolimits_{#1}}}
\def\polhk#1{\setbox0=\hbox{#1}{\ooalign{\hidewidth\lower1.5ex\hbox{`}\hidewidth\crcr\unhbox0}}}
\newtheorem{theorem}{Theorem}
\newtheorem{definition}{Definition}
\newtheorem{lemma}{Lemma}
\newtheorem{corollary}{Corollary}
\newtheorem{proposition}{Proposition}
\newtheorem{remark}{Remark}
\newtheorem{assumption}{A}
\newcommand{\abs}[1]{\left\lvert#1\right\rvert}
\begin{document}

\title{A Hessian-dependent functional with free boundaries and applications to mean-field games}

\author{Julio C. Correa and Edgard A. Pimentel}

\date{\today}

\maketitle

\begin{abstract}
\begin{spacing}{1.15}
\noindent We study a Hessian-dependent functional driven by a fully nonlinear operator. The associated Euler-Lagrange equation is a fully nonlinear mean-field game with free boundaries. Our findings include the existence of solutions to the mean-field game, together with H\"older continuity of the value function and improved integrability of the density. In addition, we prove the reduced free boundary is a set of finite perimeter. To conclude our analysis, we prove a $\Gamma$-convergence result for the functional.
\end{spacing}

\medskip

\noindent \textbf{Keywords}: Hessian-dependent functionals, fully nonlinear mean-field games with free boundaries, regularity theory.

\medskip 

\noindent \textbf{MSC2020}: 35B65; 35J35; 35R35; 35A01; 35Q89.
\end{abstract}

\vspace{.1in}

\section{Introduction}
We examine Hessian-dependent functionals of the form
\begin{equation}\label{eq_mainfunc}
	\mathcal{F}_{\Lambda,p}[u]:=\int_{B_1}F(D^2u)^pd x+\Lambda|\{u>0\}\cap B_1|,
\end{equation}
where $F:S(d)\to\mathbb{R}$ is a uniformly elliptic operator, $\Lambda>0$ is a fixed constant, $p>d/2$, and $S(d)\sim\mathbb{R}^\frac{d(d+1)}{2}$ stands for the space of symmetric matrices of order $d$. Our results include the existence of minimizers for \eqref{eq_mainfunc}, amounting to the existence of solutions to a fully nonlinear mean-field game with free boundaries. We prove H\"older-continuity of minimizers and improved integrability of the density. We also prove the free boundary has finite perimeter. Finally, we establish a result on the $\Gamma$-convergence of $\mathcal{F}_{\Lambda,p}$ and examine its consequences.

The functional in \eqref{eq_mainfunc} is inspired by the usual one-phase Bernoulli problem, driven by the Dirichlet energy. To a limited extent, we understand $\mathcal{F}_{\Lambda,p}$ as a Hessian-dependent counterpart of that problem. See \cite{AltCaf_1981}; see also \cite{CafSalbook}.

The analysis of \eqref{eq_mainfunc} relates closely with the system
\begin{equation}\label{eq_mfgmain}
	\begin{cases}
		F(D^2u)=m^\frac{1}{p-1}&\hspace{.2in}\mbox{in}\hspace{.2in}B_1\cap\{u>0\}\\
		\left(F_{i,j}(D^2u)m\right)_{x_ix_j}=0&\hspace{.2in}\mbox{in}\hspace{.2in}B_1\cap\{u>0\},
	\end{cases}
\end{equation}
where $F_{i,j}(M)$ denotes the derivative of $F$ with respect to the entry $m_{i,j}$ of $M$. Here, the unknown is a pair $(u,m)$ solving the problem in a sense we make precise further. 

The system in \eqref{eq_mfgmain} amounts to the Euler-Lagrange equation associated with \eqref{eq_mainfunc}. We notice that \eqref{eq_mfgmain} satisfies an \emph{adjoint structure}. Its double-divergence equation is the formal adjoint, in the $L^2$-sense, of the linearized fully nonlinear problem. Due to such a distinctive pattern, we refer to \eqref{eq_mfgmain} as a fully nonlinear mean-field game with free boundary. Interpreting the first equation in \eqref{eq_mfgmain} as a Hamilton-Jacobi equation, it becomes natural to ask about the underlying stochastic optimal control problem. We do not examine this matter in the present paper.

Fully nonlinear elliptic operators and equations in the double-divergence form have been studied by many authors. An attempt to put together a comprehensive list of references on those topics is unrealistic. For that reason, we mention solely the monographs \cite{bkrsbook,ccbook} and the references therein.

Our analysis sits at the intersection of Hessian-dependent functionals, free boundary problems, and mean-field games systems. Hence we proceed with some context on those classes of problems. Hessian-dependent functionals play an essencial role in various contexts. From a purely mathematical viewpoint, they are useful to produce examples of conformally invariant energies. In dimension $d=4$, this is the case of 
\[
	J[u]:=\int_{B_1}\left(\Delta u\right)^2\mathrm{d}x,
\]
whose Euler-Lagrange equation is the biharmonic operator; see \cite{achang1,achang2}. 

When it comes to applications, we mention the realm of mechanics of solids. In particular, the analysis of energy-driven pattern formation and nonlinear elasticity. For example, Hessian-dependent models play a role in studying the occurrence of wrinkles in a twisted ribbon \cite{kohn2}. The energy modeling the system depends on the thickness of the ribbon, denoted with $h$, and two symmetric tensors $M$ and $B$. It has the form
\[
	\int_{B_1}|M(u,v)|^2+h^2|B(u,v)|^2\mathrm{d}x.
\]
Although $M$ depends on its arguments only through lower-order terms, the tensor $B$ depends on $\|D^2u\|$. Another instance where Hessian-dependent functionals appear is the analysis of blister patterns in thin films on compliant substrates \cite{kohn3}. Here the phenomena are modeled in terms of lower-order quantities, a small Hessian-dependent perturbation, and a parameter $h>0$. An important question concerning this class of problems is the limiting behavior $h\to 0$; in fact, one expects that the lower and upper bounds of the functional scale similarly. We refer the reader to \cite{maggi1,maggi2,venkataramani}. In this context, \eqref{eq_mainfunc} amounts to an energy penalized by the measure of the positive phase. See also \cite{kohnicm,avilesgiga1,avilesgiga2}.

As noticed before, one can state the Euler-Lagrange equation associated with \eqref{eq_mainfunc} in terms of the fully nonlinear mean-field game system with free boundaries \eqref{eq_mfgmain}. Mean-field games comprise a set of methods and techniques to model strategic interactions involving many players \cite{ll1,ll2,ll3}; see also \cite{LCDF}. At the intersection of partial differential equations (PDE), stochastic analysis and numerical methods, this class of problems has attracted the attention of several authors, who have developed the theory in various directions. 

Under assumptions on the stochastic dynamics governing the problem (e.g., independence of the Brownian motions among the population of players), it is possible to write a mean-field game in terms of a coupling. It comprises a Hamilton-Jacobi equation, accounting for the value of the game, and a Fokker-Planck equation describing the evolution of the population.  For further references on the topics, we mention the monographs \cite{cardaliaguet,bensoussan,pim1,Achdoubook}. Fully nonlinear mean-field games are the subject of \cite{AndPim_2020,jakobsen_2021}.

The interesting aspect in \eqref{eq_mfgmain} concerns the appearance of a free boundary. At least heuristically, the game is played only in the regions where the value function is strictly positive. Combined with the free boundary condition, \eqref{eq_mfgmain} models a game in which players optimize in the region where the value function is positive and might face extinction according to a flux condition endogenously determined.

Our first contribution is to prove the existence of solutions for the mean-field game system in \eqref{eq_mfgmain}. To that end, we impose natural assumptions on the data of the problem. Namely, we require the operator $F$ driving the problem to be uniformly elliptic (Assumption A\ref{A1}) and to satisfy a norm-like growth condition (Assumption A\ref{A3}). We also require the operator $F$ to be convex (Assumption (A\ref{A2}) and the boundary data $g$ to be a function in $W^{2,p}(B_1)$ (Assumption A\ref{A4}). We report our findings in the following theorem.

\begin{theorem}[Existence and regularity of solutions]\label{teo_existence}
Suppose Assumptions A\ref{A1}, A\ref{A2}, A\ref{A3}, and A\ref{A4}, to be detailed further, are in force. In addition, suppose $p<d<2p$.  Then there exists a solution $(u,m)\in W^{2,p}(B_1)\times L^1(B_1)$ to \eqref{eq_mfgmain}. Also, for every $\alpha\in(0,p/d)$, we have $u\in C^{\alpha}_{\rm loc}(B_1)$ and there exists $C_\alpha>0$ such that 
\[
	\|u\|_{C^\alpha(B_{1/2})}\leq C_\alpha\|g\|_{W^{2,p}(B_1)}.
\]
The constant $C_\alpha>0$ depends on the exponent $\alpha$.
\end{theorem}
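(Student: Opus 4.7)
The plan is to produce a solution $(u,m)$ of \eqref{eq_mfgmain} as the Euler-Lagrange system attached to a minimizer of $\mathcal{F}_{\Lambda,p}$, obtained by the direct method of the calculus of variations, and to read off the H\"older regularity from Sobolev embedding. Fixing a boundary datum $g\in W^{2,p}(B_1)$, I would work on the admissible class $\mathcal{A}_g:=\{v\in W^{2,p}(B_1)\colon v-g\in W^{2,p}_0(B_1)\}$; since $g\in\mathcal{A}_g$, the infimum is finite. For a minimizing sequence $(u_n)\subset\mathcal{A}_g$, the energy bound controls $\|F(D^2u_n)\|_{L^p}$. Reading each $u_n$ as solving $F(D^2u_n)=f_n$ with $f_n\in L^p$ and boundary value $g$, I invoke global $W^{2,p}$-solvability for $F$ (available under A\ref{A2}--A\ref{A4}) to deduce a uniform bound $\|u_n\|_{W^{2,p}(B_1)}\le C\|g\|_{W^{2,p}(B_1)}$.

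With this bound in hand, the compact Sobolev embedding $W^{2,p}\hookrightarrow C^{0,\alpha}$ (valid because $p>d/2$, for $\alpha$ in the admissible range) yields a subsequence with $u_n\to u_\infty$ strongly in $C^{0,\alpha}_{loc}(B_1)$ and weakly in $W^{2,p}(B_1)$; passing bounds to the limit delivers the quantitative estimate $\|u_\infty\|_{C^\alpha(B_{1/2})}\le C\|g\|_{W^{2,p}(B_1)}$ of the theorem. To identify $u_\infty$ as a minimizer, I check lower semicontinuity of $\mathcal{F}_{\Lambda,p}$: the Hessian term is weakly lower semicontinuous in $W^{2,p}$ by convexity of $M\mapsto F(M)^p$, inherited from the structural hypotheses on $F$ together with $p>1$; uniform convergence forces $\chi_{\{u_\infty>0\}}\le\liminf\chi_{\{u_n>0\}}$ a.e., and Fatou delivers lower semicontinuity of the measure term.

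It remains to derive the mean-field game system from first-order optimality at $u_\infty$. Testing minimality against variations $u_\infty+\varepsilon\phi$ with $\phi\in C_c^\infty(\{u_\infty>0\})$ keeps the measure term locally constant for small $\varepsilon$ (since $u_\infty$ is continuous and bounded below by a positive constant on $\operatorname{supp}\phi$), and differentiation at $\varepsilon=0$ yields
\[
\int pF(D^2u_\infty)^{p-1}F_{i,j}(D^2u_\infty)\,\phi_{x_ix_j}\,dx=0.
\]
Setting $m:=F(D^2u_\infty)^{p-1}$, the pointwise identity $F(D^2u_\infty)=m^{1/(p-1)}$ holds in $\{u_\infty>0\}$, and the displayed identity (up to the inessential factor $p$) is the weak form of the double-divergence equation in \eqref{eq_mfgmain}, so $(u_\infty,m)$ is the desired solution.

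The main obstacle is the uniform $W^{2,p}$ bound on the minimizing sequence: because $F$ may vanish on nontrivial matrices, control of $\int F(D^2u_n)^p\,dx$ does not by itself bound $\|D^2u_n\|_{L^p}$. Overcoming this requires global Calder\'on-Zygmund type estimates for $F(D^2u)=f$ with $f\in L^p(B_1)$ and boundary value $g\in W^{2,p}$, which is precisely the role of the structural hypotheses A\ref{A2}--A\ref{A4}; weak lower semicontinuity of the Hessian term hinges on the same assumptions.
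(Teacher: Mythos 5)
Your overall architecture matches the paper's: direct method for a minimizer of $\mathcal{F}_{\Lambda,p}$, lower semicontinuity from convexity of $M\mapsto F(M)^p$ plus Fatou for the measure term, and then variations supported in $\{u>0\}$ to read off the double-divergence equation with $m:=F(D^2u)^{p-1}$. Two points, however, need to be flagged.

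\emph{The H\"older estimate for every $\alpha\in(0,1)$ does not follow from Sobolev embedding.} You deduce $u\in C^{0,\alpha}$ from $W^{2,p}\hookrightarrow C^{0,\alpha}$, ``for $\alpha$ in the admissible range.'' But when $d/2<p<d$ that range is only $\alpha\le 2-d/p$, which can be arbitrarily close to $0$, whereas the theorem asserts $u\in C^\alpha_{loc}$ for \emph{every} $\alpha\in(0,1)$. This is precisely the nontrivial content (compare the paper's Remark on improved regularity of the value function). The paper closes the gap in its Step 4: using $F(D^2u)\ge 0$ and the Harnack inequality, $u$ is essentially bounded; a Gagliardo--Nirenberg interpolation with the $L^p$ bound on $D^2u$ and the $L^\infty$ bound on $u$ then yields $Du\in L^r(B_1)$ for every finite $r$, and Morrey's theorem upgrades this to $C^\alpha$ for all $\alpha<1$. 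Your argument, as written, does not reach this conclusion.

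\emph{The $W^{2,p}$ compactness does not require Calder\'on--Zygmund estimates.} You identify the uniform $W^{2,p}$ bound as ``the main obstacle'' because $F$ might vanish on nontrivial matrices, and propose to invoke global $W^{2,p}$ solvability for $F(D^2u)=f$. Assumption A\ref{A3} is, however, a \emph{two-sided} pointwise bound on all of $S(d)$: $\frac1\lambda\|M\|\le F(M)\le\lambda\|M\|$. Hence $F$ vanishes only at $M=0$, and the bound $\|D^2u_n\|_{L^p}\le\lambda\|F(D^2u_n)\|_{L^p}$ follows directly from the energy bound -- exactly the paper's Step 1. Your alternative route is also logically shaky: for an arbitrary $u_n$ in the minimizing sequence, setting $f_n:=F(D^2u_n)$ and then appealing to solvability of the Dirichlet problem does not by itself yield an a priori bound for $u_n$; one would need uniqueness/comparison to identify $u_n$ with the $W^{2,p}$ solution, which is not asserted here and is in any case unnecessary under A\ref{A3}.

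A smaller omission: the paper also verifies $u^*\ge 0$ (so that $\{u>0\}$ and the free boundary are meaningful) and the qualitative properties $m\ge 0$, $m\in L^1_{loc}$, and gives meaning to $F(D^2u)=m^{1/(p-1)}$ in the $L^p$-viscosity sense rather than merely pointwise a.e.; your sketch takes all of this for granted.
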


If, in addition, $F$ is strictly convex and $p>2$, we can prove that $m$ is not only integrable but is indeed an $L^\frac{p}{p-1}$-function, with estimates; c.f. \cite{fabesstroock}. To establish Theorem \ref{teo_existence}, we start with the direct method in the calculus of variations and the existence of minimizers for \eqref{eq_mainfunc}. Then we turn our attention to \eqref{eq_mfgmain}. First, we resort to the theory of weak solutions available for equations in the double-divergence form. Finally, elements in the $L^p$-viscosity theory lead to the existence of solutions to the system. To complete the proof, we resort to a delicate application of Sobolev inequalities.

Once we have established the existence of solutions for \eqref{eq_mfgmain} and produced a regularity result, we examine the free boundary. Regularity results for the solutions build upon ingredients of geometric measure theory to ensure the reduced free boundary is a set of finite perimeter. We summarize our findings in this direction in the following result.

\begin{theorem}[Free boundary condition and finite perimeter]\label{teo_fb}
Let $u\in W^{2,p}_{\rm loc}(B_1)\cap W^{1,p}_g(B_1)$ be a local minimizer for \eqref{eq_mainfunc}, for $p>d/2$. Suppose Assumptions A\ref{A1}, A\ref{A2}, A\ref{A3} and A\ref{A4}, to be detailed further, are in force. Then the reduced free boundary, denoted with $\partial^*\{u>0\}$, is a set of finite perimeter. 
\end{theorem}
The remainder of this paper is organized as follows. Section \ref{subsec_assump} details our main assumptions, whereas Section \ref{subsec_prelim} gathers preliminary material and results. Section \ref{Sec:Existence} presents the proof of Theorem \ref{teo_existence}. In Section \ref{section_theorem2}, we examine the free boundary and put forward the proof of Theorem \ref{teo_fb}. A final section closes the paper with a $\Gamma$-convergence result and some consequences.

\section{Preliminaries}\label{Section:MainAssumptions}
This section presents the main assumptions under which we work and collects some preliminary notions and results.

\subsection{Main assumptions}\label{subsec_assump}
We proceed with a condition on the uniform ellipticity of the operator $F$.

\begin{assumption}[Uniform ellipticity]\label{A1}
We suppose the operator $F: S(d) \rightarrow$ $\mathbb{R}$ is $\lambda$-elliptic for some $\lambda\geq 1$. That is,
\begin{equation}\label{eq_lambdaeliptic}
	\frac1\lambda\|N\| \leq F(M+N)-F(M) \leq \lambda\|N\|
\end{equation}
for every $M, N \in S(d)$, with $N \geq 0 $. We also suppose $F(0)=0$.  Finally, we require $F_{ij}(M)=F_{ji}(M)$, for every $i,j=1,\ldots,d$, where $F_{ij}(M)$ stands for the derivative of $F$ with respect to the entry $(i,j)$ of $M$.
\end{assumption}

\begin{remark}\label{Remark:Assumptions}
Note that A\ref{A1}  implies a coercivity condition on $F$ over non-negative matrices. By taking $M \equiv 0$, the inequalities in \eqref{eq_lambdaeliptic} yield
\[
	\frac1\lambda\|N\| \leq F(N) \leq \lambda\|N\|
\]
for every $N \geq 0 $. 
\end{remark}

Next, we impose a convexity condition on the operator $F$.

\begin{assumption}[Convexity of the operator $F$]\label{A2}
We suppose the operator $F=F(M)$ to be convex with respect to $M$.
\end{assumption}

Part of our arguments requires $F$ to satisfy a coercivity condition in the entire $S(d)$. To that end, we strength A\ref{A1} as follows.
\begin{assumption}[Growth condition]\label{A3}
 We suppose there exists $\lambda\geq 1$ such that the operator $F$ satisfies
\[
	\frac1\lambda\|M\| \leq F(M) \leq \lambda\|M\|
\]
for every $M \in S(d)$.
\end{assumption}

The typical example of an operator $F=F(M)$, satisfying the former assumptions, depends on $M$ through its norm. For instance, let $A\in \mathbb{R}$ be a constant and consider
\[
	F(M):=A\|M\|.
\]
For a more general operator, including explicit dependence on the space-variable $x\in B_1$, we consider $A\in W^{2,p}_{\rm loc}(B_1)$ and define $F=F(x,M)$ as
\[
	F(x,M):=A(x)\|M\|.
\]
For further examples related to the previous one see \cite{kohnicm,kohn3}.

Notice that A\ref{A3} implies $F\geq 0$; this fact is important when studying the weak lower semicontinuity of our functional. We conclude this section with an assumption on the boundary data.

\begin{assumption}[Boundary data]\label{A4}
 We suppose the function $g\in W^{2,p}(B_1)$ is non-negative and non-trivial.
\end{assumption}

\subsection{Preliminary notions and results}\label{subsec_prelim} For the sake of completeness, we recall definitions and former results we use throughout the manuscript. We continue with a definition 
\begin{definition}[Affine Sobolev spaces]
 Let $p>d/2$ and $g\in W^{2,p}_{\rm loc}(B_1)$. We say that 
 \[
 	u\in W^{2,p}_{\rm loc}(B_1)\cap W^{1,p}_g(B_1)
 \]
 if $u\in W^{2,p}_{\rm loc}(B_1)$ and $u-g\in W^{1,p}_0(B_1)$.
\end{definition}
From a PDE perspective, having $u\in W^{2,p}_{\rm loc}(B_1)\cap W^{1,p}_g(B_1)$ is tantamount to prescribe $u=g$ on $\partial B_1$ in the Sobolev sense. This interpretation will be helpful when relating \eqref{eq_mainfunc} and \eqref{eq_mfgmain}.

As usual in the literature on mean-field games \cite{ll1,ll2,ll3}, a solution to \eqref{eq_mfgmain} relies on two distinct definitions -- namely, the notions of viscosity and weak (distributional) solutions. We proceed by recalling the definition of $L^p$-viscosity solution of a fully nonlinear elliptic equation;  see \cite[Definition 2.1]{CafCraKocSwi_1996}.

\begin{definition}[$L^p$-viscosity solutions]\label{def_lpviscosity}
Let $F:S(d)\to\mathbb{R}$ be a fully nonlinear operator satisfying A\ref{A1} and $f\in L^p_{\rm loc}(B_1)$, for $p>d/2$. A function $u \in C(B_1)$ is an $L^{p}$-viscosity sub-solution (resp. super-solution) of 
\[
	F(D^2u)=f\hspace{.2in}\mbox{in}\hspace{.2in}B_1
\]
if, for all $\varphi \in W_{\mathrm{loc}}^{2, p}(B_1)$, whenever $\varepsilon>0$, $U \subset B_1$ is open, and
\begin{align*}
F\left(D^{2} \varphi(x)\right)-f(x) &\geq+\varepsilon \hspace{.2in} \mbox{a.e.}\;x\in U\\
(\mbox{resp. } F\left(D^{2} \varphi(x)\right)-f(x) &\leq-\varepsilon \hspace{.2in} \mbox{a.e.}\;x\in U),
\end{align*}
then $u-\varphi$ cannot have a local maximum (resp. minimum) in $U$. Moreover, if $u$ is both an $L^p$-viscosity sub-solution and an $L^p$-viscosity super-solution, $u$ is said to be an $L^p$-viscosity solution.
\end{definition}

The definition of $L^p$-viscosity solution is necessary since $L^p$-functions might not be defined at the points where the usual conditions must be tested. For a comprehensive account of this notion, we refer the reader to \cite{CafCraKocSwi_1996}. We continue with the definition of weak solutions for double-divergence equations.

\begin{definition}[Weak solution]\label{def_weaksol}
Let $A\in L^\infty(B_1,S(d))$ and denote $A(x)=:[a_{i,j}(x)]_{i,j=1}^d$. Suppose 
\[
	\frac{1}{\lambda} I\leq A(x)\leq \lambda I\hspace{.2in}\mbox{a.e.}-x\in B_1.
\]
We say $m\in L^1(B_1)$ is a weak solution to
\[
	\left(a_{i,j}(x)m\right)_{x_ix_j}=0\hspace{.2in}\mbox{in}\hspace{.2in}B_1
\]
if, for every $\phi\in C^\infty_c(B_1)$ we have
\[
	\int_{B_1}\left(a_{i,j}m\right)\phi_{x_ix_j}\mathrm{d}x=0.
\]
\end{definition}

A solution to the mean-field game in \eqref{eq_mfgmain} combines Definitions \ref{def_lpviscosity} and \ref{def_weaksol}. 

\begin{definition}[Solution for the MFG system]\label{def_solmfg}
The pair $(u,m)$ is a weak solution to \eqref{eq_mfgmain} if the following hold:
\begin{enumerate}
\item We have $u\in C(B_1)\cap W^{1,p}_g$ and $m\in L^1(B_1)$, with $m\geq 0$;
\item The function $u$ is an $L^p$-viscosity solution to 
\[
	F(D^2u)=m^\frac{1}{p-1}\hspace{.2in}\mbox{in}\hspace{.2in}B_1\cap\{u>0\};
\]
\item The function $m$ is a weak solution to 
\[
	\left(F_{ij}(D^2u)m\right)_{x_ix_j}=0\hspace{.2in}\mbox{in}\hspace{.2in}B_1\cap\{u>0\}.
\]
\end{enumerate}
\end{definition}

Next, we recall the Poincar\'e's inequality for functions lacking compact support. In particular, we are interested in $u\in W^{1,p}_g(B_1)$.

\begin{lemma}[Poincar\'e's inequality]\label{poincare}
Let $u \in W_{g}^{1, p}\left(B_{1}\right)$ and $C_{p}>0$ be the Poincar\'e's constant associated with $L^{p}\left(B_{1}\right)$ and the dimension $d$. Then for every $C<C_{p}$, there exists $C_{1}\left(C, C_{p}\right)>0$ and $C_{2} \geq 0$ such that
$$
\int_{B_{1}}|D u|^{p} d x-C \int_{B_{1}}|u|^{p} d x+C_{2} \geq C_{1}\left(\int_{B_{1}}|D u|^{p} d x+\int_{B_{1}}|u|^{p} d x\right).
$$
\end{lemma}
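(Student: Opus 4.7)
The plan is to reduce the statement to the classical Poincar\'e inequality on $W^{1,p}_0(B_1)$ through the affine substitution $u=v+g$ with $v\in W^{1,p}_0(B_1)$, and to absorb every quantity depending only on $g$ into the constant $C_2$. Once this reduction is made, the essential content of the lemma is that, for $C$ strictly smaller than the critical constant $C_p$, the quantity $\int_{B_1}|Du|^p\,dx - C\int_{B_1}|u|^p\,dx$ controls both $\int_{B_1}|Du|^p\,dx$ and $\int_{B_1}|u|^p\,dx$ simultaneously, modulo a $g$-dependent additive constant.

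First I would use the elementary inequality
\[
|a+b|^p \leq (1+\varepsilon)|a|^p + C(\varepsilon,p)|b|^p,\qquad p>1,\ \varepsilon>0,
\]
applied once with $a=Du,\ b=-Dg$ (yielding a lower bound on $\int_{B_1}|Du|^p\,dx$ in terms of $\int_{B_1}|Dv|^p\,dx$) and once with $a=v,\ b=g$ (yielding an upper bound on $\int_{B_1}|u|^p\,dx$ in terms of $\int_{B_1}|v|^p\,dx$). Substituting into the target expression and collecting the $g$-terms yields
\[
\int_{B_1}|Du|^p\,dx - C\int_{B_1}|u|^p\,dx \geq \tfrac{1}{1+\varepsilon}\int_{B_1}|Dv|^p\,dx - C(1+\varepsilon)\int_{B_1}|v|^p\,dx - K_\varepsilon(g),
\]
where $K_\varepsilon(g)<\infty$ depends only on $\varepsilon$, $p$, and the fixed norms $\|Dg\|_{L^p}$, $\|g\|_{L^p}$. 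Because $C<C_p$, I can now choose $\varepsilon>0$ small enough that $(1+\varepsilon)^{-1}C_p>C(1+\varepsilon)$. A small convex split $(1+\varepsilon)^{-1}\int_{B_1}|Dv|^p\,dx = \eta\int_{B_1}|Dv|^p\,dx + [(1+\varepsilon)^{-1}-\eta]\int_{B_1}|Dv|^p\,dx$, combined with the classical Poincar\'e estimate $\int_{B_1}|Dv|^p\,dx\geq C_p\int_{B_1}|v|^p\,dx$ applied to the second piece, then produces a lower bound of the form $\tau\bigl(\int_{B_1}|Dv|^p\,dx+\int_{B_1}|v|^p\,dx\bigr)$ with $\tau=\tau(C,C_p,p)>0$.

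To return to the original variable $u$, I would invoke the reverse-type inequality $|a-b|^p \geq 2^{1-p}|a|^p - |b|^p$ (an immediate consequence of $|a|^p\leq 2^{p-1}(|a-b|^p+|b|^p)$), applied to $Dv=Du-Dg$ and to $v=u-g$. This converts $\int_{B_1}|Dv|^p\,dx+\int_{B_1}|v|^p\,dx$ back into $2^{1-p}\bigl(\int_{B_1}|Du|^p\,dx+\int_{B_1}|u|^p\,dx\bigr)$ at the cost of another $g$-dependent constant, which is again absorbed into $C_2$. The resulting constants read $C_1=2^{1-p}\tau$ and $C_2=K_\varepsilon(g)+\tau\bigl(\|Dg\|_{L^p}^p+\|g\|_{L^p}^p\bigr)$. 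The only genuine subtlety is ordering the choice of the small parameters $\varepsilon$ and $\eta$ so that the strict inequality $C<C_p$ survives as quantitative positivity, and so that the two independent Young-type perturbations --- one relating $u$ to $v$, the other relating $|Du|$ to $|Dv|$ --- do not collide; beyond this bookkeeping, the argument is routine.
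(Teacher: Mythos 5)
Your proposal is correct and carries out exactly the reduction the paper itself only gestures at: the paper gives no proof of Lemma~\ref{poincare}, citing instead Dal Maso's book and noting that it ``follows from $u-g\in W^{1,p}_0(B_1)$ and the usual Poincar\'e's inequality.'' Your substitution $u=v+g$, the Young-type splitting with parameter $\varepsilon$, the convex split with $\eta$ followed by classical Poincar\'e on $v\in W^{1,p}_0(B_1)$, and the reverse inequality $|a-b|^p\geq 2^{1-p}|a|^p-|b|^p$ to return to $u$ together constitute precisely that argument, and the bookkeeping is sound. The only thing worth making explicit is that the step which multiplies $\int|u|^p\leq(1+\varepsilon)\int|v|^p+C(\varepsilon,p)\int|g|^p$ by $-C$ to obtain a lower bound tacitly assumes $C>0$; for $C\leq 0$ the statement is immediate by comparison with any $C'\in(0,C_p)$, since $-C\int|u|^p\geq -C'\int|u|^p$, so nothing is lost.
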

For the detailed proof of this fact, we refer the reader to \cite[Lemma 2.7, p. 22]{Dal_1993}. It follows from $u-g\in W^{1,p}_0(B_1)$ and the usual Poincar\'e's inequality. 

In Section \ref{Sec:Existence}, we deal with the existence of minimizers for $\mathcal{F}_{\Lambda,p}$ in $W^{2,p}_{\rm loc}(B_1)\cap W^{1,p}_g(B_1)$. Our reasoning uses the weak lower-semicontinuity of the functional
\begin{equation*}\label{Def:rF0}
	u\mapsto\mathcal{F}_{0,p}[u]:=\int_{B_1}\left( F(D^2u)\right)^p\mathrm{d}x;
\end{equation*}
this is the content of the following lemma.
\begin{lemma}\label{lemma:semicontinuity}
Let $p>d/2$ and suppose A\ref{A2}, A\ref{A3} and A\ref{A4} hold true. Let $\left(u_{n}\right)_{n \in \mathbb{N}} \subset W^{2,p}_{\rm loc}(B_1)\cap W^{1,p}_g(B_1)$ be such that
$$
D^{2} u_{n} \rightharpoonup D^{2} u_{\infty} \quad \text{in} \quad L^{p}\left(B_{1}, S(d)\right),
$$
Then,
$$
	\int_{B_1}\left( F(D^2u_\infty)\right)^p \mathrm{d}x\leq \liminf _{n \rightarrow \infty} \int_{B_1}\left( F(D^2u_n)\right)^p \mathrm{d}x 
$$
\end{lemma}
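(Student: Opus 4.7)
\textbf{Proof plan for Lemma \ref{lemma:semicontinuity}.} My plan is to reduce the statement to the classical weak lower-semicontinuity theorem for convex integrands acting on weakly convergent Hessians. Define the integrand $\Phi:S(d)\to\mathbb{R}$ by $\Phi(M):=(F(M))^p$. The first step is to verify that $\Phi$ is convex and continuous on $S(d)$. Convexity follows by composing two convex functions: by Assumption A\ref{A2}, $F$ is convex, and by Assumption A\ref{A3} combined with $F(0)=0$, we have $F(M)\geq\tfrac{1}{\lambda}\|M\|\geq 0$, so the range of $F$ lies in $[0,\infty)$, where $t\mapsto t^p$ is convex and non-decreasing; hence the composition $M\mapsto (F(M))^p$ is convex. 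The upper bound in A\ref{A3} yields $0\leq \Phi(M)\leq \lambda^p\|M\|^p$, so $\Phi$ is continuous and of $p$-growth, making $\int_{B_1}\Phi(D^2 u)\,dx$ well defined for every $u\in W^{2,p}_{loc}(B_1)$.

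Next, I would invoke the standard weak lower-semicontinuity result for convex integrands (see, e.g., Dacorogna or Dal Maso): if $\Phi:S(d)\to\mathbb{R}$ is convex and continuous, and $H_n\rightharpoonup H_\infty$ in $L^p(B_1,S(d))$, then
\[
	\int_{B_1}\Phi(H_\infty)\,dx\;\leq\;\liminf_{n\to\infty}\int_{B_1}\Phi(H_n)\,dx.
\]
Applied with $H_n:=D^2u_n$ and $H_\infty:=D^2u_\infty$, this yields exactly the conclusion of the lemma. For self-containedness, one can give a short direct argument: use Mazur's lemma to obtain convex combinations $V_n\in\co\{D^2u_k:k\geq n\}$ such that $V_n\to D^2u_\infty$ strongly in $L^p(B_1,S(d))$; extract a subsequence converging pointwise almost everywhere; then apply convexity of $\Phi$ to the $V_n$, together with Fatou's lemma and the continuity of $\Phi$, to control $\int\Phi(D^2u_\infty)$ by $\liminf\int\Phi(V_n)$, which in turn is controlled by $\liminf\int\Phi(D^2u_n)$ via the convex-combination structure.

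The only genuinely nontrivial point is the convexity of the composition $(F(\cdot))^p$, which could fail if $F$ were allowed to take negative values (since $t\mapsto t^p$ is only convex and non-decreasing on $[0,\infty)$); this is precisely where the coercive growth condition A\ref{A3}, in its strengthened form giving $F\geq 0$ on the whole of $S(d)$, is used rather than the weaker A\ref{A1}. Once convexity is in place, the argument is completely standard and the role of A\ref{A4} is only to ensure that the space $W^{2,p}_{loc}(B_1)\cap W^{1,p}_g(B_1)$ is nonempty so that the statement is meaningful.
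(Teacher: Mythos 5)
Your proof is correct and is essentially the same argument as the one the paper relies on. Note that the paper does not reprove this lemma; it cites \cite[Proposition 3]{AndPim_2020}, and the content of that reference is precisely the reduction you carry out: under A\ref{A2} the operator $F$ is convex, under A\ref{A3} it is nonnegative on all of $S(d)$ with linear growth, hence $M\mapsto (F(M))^p$ is a convex, continuous, nonnegative integrand of $p$-growth, and weak lower semicontinuity along $D^2u_n\rightharpoonup D^2u_\infty$ in $L^p(B_1,S(d))$ follows from the classical convexity theorem (or, as you sketch, from Mazur's lemma plus Fatou). You also correctly isolate the one place where the strengthened assumption A\ref{A3} (rather than the ellipticity condition A\ref{A1}, which forces $F$ to be negative on negative-definite matrices) is genuinely needed: without global nonnegativity of $F$, the outer power $t\mapsto t^p$ would not be monotone on the range of $F$ and the composition argument for convexity of $(F(\cdot))^p$ would break down. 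The only small point worth recording is that the monotone-convex composition step uses $p\geq 1$, which is guaranteed here since $p>d/2$ and $d\geq 2$ is implicit throughout the paper.
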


For the proof of Lemma \ref{lemma:semicontinuity}, we refer to \cite[Proposition 3]{AndPim_2020}.  In what follows, we detail the proof of Theorem \ref{teo_existence}.

\section{Existence of solutions}\label{Sec:Existence}

In this section, we present the proof of Theorem \ref{teo_existence}; we start by establishing the existence of minimizers for \eqref{eq_mainfunc}.
\begin{proposition}[Existence of minimizers]\label{PropositionExistencemin}
Suppose Assumptions A\ref{A2}, A\ref{A3}, and A\ref{A4} are in force and fix $ p>d/2$, arbitrary. Then there exists $u^*\in W^{2,p}_{\rm loc}(B_1)\cap W^{1,p}_g(B_1)$ such that
\[
	\mathcal{F}_{\lambda,p}[u^*]\le\mathcal{F}_{\lambda,p}[u],
\]
for all $u\in W^{2,p}_{\rm loc}(B_1)\cap W^{1,p}_g(B_1)$.
\end{proposition}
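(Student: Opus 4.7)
The plan is to apply the direct method in the calculus of variations, combining the weak lower semicontinuity of the Hessian term supplied by Lemma \ref{lemma:semicontinuity} with a separate pointwise-convergence argument for the non-convex measure term.

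First I would verify the basic setup. Assumption A\ref{A3} gives $F(M)\ge\|M\|/\lambda \ge 0$ for every $M \in S(d)$, so $\mathcal{F}_{\Lambda,p}$ is non-negative, and by A\ref{A4} the boundary datum $g$ is itself an admissible competitor with finite energy; hence $\inf \mathcal{F}_{\Lambda,p}$ is finite. Fix a minimizing sequence $(u_n)\subset W^{2,p}_{loc}(B_1)\cap W^{1,p}_g(B_1)$. From the upper bound in A\ref{A3},
\[
\int_{B_1}\|D^2u_n\|^{p}\,dx \leq \lambda^{p}\int_{B_1}F(D^2u_n)^{p}\,dx \leq \lambda^{p}\mathcal{F}_{\Lambda,p}[u_n]\le C.
\]
Since $u_n-g\in W^{1,p}_0(B_1)$, applying Poincar\'e (Lemma \ref{poincare}) once to $u_n-g$ and once to $D(u_n-g)$ upgrades this into a uniform bound $\|u_n\|_{W^{2,p}(B_1)}\le C$. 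By reflexivity, along a subsequence $u_n\rightharpoonup u^*$ in $W^{2,p}(B_1)$ with $D^2u_n\rightharpoonup D^2u^*$ in $L^{p}(B_1,S(d))$, and weak continuity of the trace preserves the boundary condition, giving $u^*\in W^{2,p}_{loc}(B_1)\cap W^{1,p}_g(B_1)$.

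Because $p>d/2$, the Sobolev embedding $W^{2,p}(B_1)\hookrightarrow C^{0,\alpha}(\overline{B_1})$ is compact, so up to a further subsequence $u_n\to u^*$ uniformly on $\overline{B_1}$, and in particular pointwise. Lemma \ref{lemma:semicontinuity} (which exploits A\ref{A2}) then gives
\[
\int_{B_1}F(D^2u^*)^{p}\,dx \leq \liminf_{n\to\infty}\int_{B_1}F(D^2u_n)^{p}\,dx.
\]
For the measure term, if $u^*(x)>0$, uniform convergence forces $u_n(x)>0$ for all sufficiently large $n$, so pointwise on $B_1$ one has $\chi_{\{u^*>0\}}(x)\le\liminf_{n}\chi_{\{u_n>0\}}(x)$. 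Fatou's lemma yields
\[
|\{u^*>0\}\cap B_1|\le\liminf_{n\to\infty}|\{u_n>0\}\cap B_1|.
\]
Adding the two lower-semicontinuity estimates and using superadditivity of $\liminf$ produces $\mathcal{F}_{\Lambda,p}[u^*]\le \liminf_n \mathcal{F}_{\Lambda,p}[u_n]=\inf\mathcal{F}_{\Lambda,p}$, identifying $u^*$ as a minimizer.

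The main obstacle I anticipate is the non-convex measure term: it is neither convex nor continuous under weak $W^{2,p}$-convergence, and its lower semicontinuity is obtained only through pointwise convergence of the sequence. It is precisely the condition $p>d/2$, which renders the embedding into H\"older continuous functions compact, that rescues the argument and motivates the integrability threshold imposed throughout the paper.
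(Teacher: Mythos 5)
Your proposal is correct and follows essentially the same direct-method strategy as the paper: establish finiteness of the infimum, bound the Hessian via A\ref{A3}, extract a weakly convergent subsequence, invoke Lemma~\ref{lemma:semicontinuity} for the convex Hessian term, and treat the measure term by pointwise convergence plus Fatou. The only (harmless) variation is in how pointwise convergence is secured: the paper passes through strong $L^p$-convergence and extracts an a.e.\ convergent subsequence, whereas you invoke the compact embedding $W^{2,p}(B_1)\hookrightarrow C^{0,\alpha}(\overline{B_1})$ (valid precisely because $p>d/2$) to get uniform convergence and thus pointwise convergence everywhere without a further subsequence; both routes work, and yours makes cleaner use of the threshold $p>d/2$. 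One small point to tighten: you say you apply Poincar\'e ``once to $u_n-g$ and once to $D(u_n-g)$,'' but $D(u_n-g)$ does not lie in $W^{1,p}_0(B_1)$, so Lemma~\ref{poincare} does not directly apply to it; the full $W^{2,p}$-bound should instead be obtained, as the paper does, from $u_n-g\in W^{1,p}_0$ together with standard second-order estimates (e.g.\ Calder\'on--Zygmund with zero boundary data, or the interpolation inequalities in \cite{CiaMaz_2016}).
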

\begin{proof}
Under Assumptions A\ref{A2} and A\ref{A3}, the existence of minimizers follows from the direct method in the calculus of variations. We split the argument into three steps. 

\medskip

\noindent{\bf Step 1 - }We first examine
$$
\gamma:=\inf \left\{\mathcal{F}_{\Lambda,p}[u]: u\in W^{2,p}_{\rm loc}(B_1)\cap W^{1,p}_g(B_1)\right\}.
$$
In view of the Remark \ref{Remark:Assumptions}, $\gamma\ge 0$. Furthermore, since $g\in W^{2,p}(B_1)$,
\begin{align*}
\gamma &\leq \mathcal{F}_{\Lambda,p}[g]\\
&\le\int_{B_1}\left( F(D^2g)\right)^p\mathrm{d}x+\Lambda\abs{B_1}\\
&\le\lambda^p\|D^2g\|^{p}_{L^p(B_1)}+\Lambda\abs{B_1}.
\end{align*}
 Hence, $0\le\gamma\leq C(g,\Lambda)<\infty $. Let $\left(u_{n}\right)_{n \in \mathbb{N}} \subset W^{2,p}_{\rm loc}(B_1)\cap W^{1,p}_g(B_1)$ be a minimizing sequence; there exists $N \in \mathbb{N}$ such that
\[
	\mathcal{F}_{\Lambda,p}[u_n] \leq \gamma+1,
\]
for every $n\geq N$. Therefore, for all $n\geq N$,
\begin{align*}
	\left\|D^{2} u_{n}\right\|_{L^{p}\left(B_{1}\right)}
		&\le\lambda\left(\int_{B_1}\left( F(D^2u_n)\right)^p \mathrm{d}x\right)^{\frac1p}\\
		&\leq \lambda\left( \int_{B_{1}}\left[F\left(D^{2} u_{n}\right)\right]^{p} d x+\Lambda\abs{\{u_n>0\}\cap B_1}\right)^{\frac1p}\\
		&\leq C(\gamma,p).
\end{align*}
In the next step the upper bound for $D^2u_n$ builds upon properties of the functional.

\medskip

\noindent{\bf Step 2 - }As a consequence of the former inequality, we infer that $\left(D^{2} u_{n}\right)_{n \in \mathbb{N}}$ is uniformly bounded in $L^{p}\left(B_{1}\right) $. Since $p>d/2$, the embedding $W^{2,p}(B_1)\hookrightarrow W^{1,p}(B_1)$ is compact. Furthermore, we conclude that $\left(u_{n}\right)_{n \in \mathbb{N}}$ is uniformly bounded in $W^{2,p}_{\rm loc}(B_1)\cap W^{1,p}_g(B_1)$; it follows from Lemma \ref{poincare} combined with general facts \cite{CiaMaz_2016}. Hence, there exists $u_{\infty} \in W^{2,p}_{\rm loc}(B_1)\cap W^{1,p}_g(B_1)$ such that 
 \begin{equation}\label{Eq1:PropositionExistencemin}
u_{n} \rightharpoonup u_{\infty} \text{ in } W^{2,p}_{\rm loc}(B_1)\cap W^{1,p}_g(B_1)
\end{equation}
and
\begin{equation}\label{Eq11:PropositionExistence}
 u_n\to u_{\infty} \text{ strongly in } L^p(B_1) .
 \end{equation}
The result follows at once if we ensure that 
\begin{equation}\label{Eq2:PropositionExistencemin}
\int_{B_1}\left( F(D^2u_{\infty})\right)^p\mathrm{d}x\le\liminf_{n\to\infty}\int_{B_1}\left( F(D^2u_n)\right)^p\mathrm{d}x
\end{equation}
and 
\begin{equation}\label{Eq2*:PropositionExistencemin}
	|\{u_{\infty}>0\}\cap B_1|\leq\liminf_{n\to\infty}|\{u_n>0\}\cap B_1|
\end{equation}
hold.  Notice that Lemma \ref{lemma:semicontinuity} combines the convergence mode in \eqref{Eq1:PropositionExistencemin} to yield \eqref{Eq2:PropositionExistencemin}. In the sequel, we establish \eqref{Eq2*:PropositionExistencemin}.

\medskip

\noindent{\bf Step 3 - }Because of the strong convergence \eqref{Eq11:PropositionExistence}, there exists a subsequence, also denoted with $(u_n)_{n\in\mathbb{N}}$, and a negligible subset $\mathcal{N}\subset B_1$, such that  $u_n(x)\to u_\infty(x)$ for every $x$ in $B_1\setminus\mathcal{N}$. As a consequence, if $u_\infty(x)>0$, there exists $N\in\mathbb{N}$ such that $u_n(x)>0$ for every $n\geq N$. If $u_\infty(x)=0$, then $\chi_{\{u_\infty>0\}}(x)=0$. Therefore,
\begin{equation}\label{Eq3:PropositionExistencemin}
	\chi_{\{u_\infty>0\}}(x)\le\liminf_{n\to\infty}\chi_{\{u_n>0\}}(x)
\end{equation}
for almost every $x\in B_1\setminus\mathcal{N}$. 
Hence,
\begin{equation*}
\begin{split}
\abs{\{u_\infty>0\}\cap B_1}
&=\int_{B_1}\chi_{\{u_\infty>0\}}\mathrm{d}x\\
&\le\liminf_{n\to\infty}\int_{B_1}\chi_{\{u_n>0\}}\mathrm{d}x\\
&=\liminf_{n\to\infty}\abs{\{u_n>0\}\cap B_1},
\end{split}
\end{equation*}
which completes the proof.
\end{proof}

\begin{remark}\normalfont
We notice the minimizing sequence $(u_n)_{n\in\mathbb{N}}$ is uniformly bounded in $W^{2,p}(B_1)$. As a consequence, it is also uniformly bounded in some H\"older space. Therefore, we could have used uniform convergence in \eqref{Eq1:PropositionExistencemin}.
\end{remark}

We close this section with the proof of Theorem \ref{teo_existence}.

\begin{proof}[Proof of Theorem \ref{teo_existence}] We split the proof into four steps.

\medskip

\noindent{\bf Step 1 - } Let $u^*\in W^{2,p}_{\rm loc}(B_1)\cap W^{1,p}_g(B_1)$ be the minimizer for \eqref{eq_mainfunc} whose existence follows from Proposition \ref{PropositionExistencemin}. There exists $\mathcal{N}\subset B_1$ such that $D^2u^*(x)$ is well-defined for every $x\in B_1\setminus\mathcal{N}$, with $|\mathcal{N}|=0$. This fact, combined with A\ref{A3}, implies that $F(D^2u^*(x))\geq 0$ for almost every $x\in B_1$. Therefore, $u^*$ satisfies $F(D^2u^*)\geq 0$ in the $L^p$-viscosity sense; see \cite[Lemma 2.6]{CafCraKocSwi_1996}. 

\medskip

\noindent{\bf Step 2 - }By considering a variation of $u^*$ compactly supported in $B_1\cap\{u>0\}$, we obtain
\begin{equation}\label{eq_msol1}
	\int_{B_1\cap\{u>0\}}\left(F_{ij}(D^2u^*)F(D^2u^*)^{p-1}\right)\varphi_{x_ix_j}\mathrm{d}x=0
\end{equation}
for every $\varphi\in C^\infty_c(B_1\cap\{u>0\})$. Set $F(D^2u^*)=:m^\frac{1}{p-1}$; we infer that $m(x)$ is well-defined and satisfies $m(x)\geq 0 $ for almost every $x\in B_1$. In addition, 
\[
	\begin{split}
		\int_{B_1}m(x)\mathrm{d}x&\leq \int_{B_1}1^{p}{\rm d}x+\int_{B_1}\left[F(D^2u^*)^{(p-1)}\right]^{p/(p-1)}{\rm d}x\\
			&\leq C+C(\lambda,\Lambda)\left\|g\right\|_{W^{2,p}(B_1)};
	\end{split}
\]
that is, $m\in L^1(B_1)$. Finally, we notice the integral in \eqref{eq_msol1} is well-defined and leads to
\[
		\int_{B_1\cap\{u>0\}}\left(F_{ij}(D^2u^*)m\right)\varphi_{x_ix_j}\mathrm{d}x=0,
\]
for every $\varphi\in C^\infty_c(B_1\cap\{u>0\})$. 

\medskip

\noindent{\bf Step 3 - }It remains to check that $u^*$ is an $L^p$-viscosity solution to the first equation in \eqref{eq_mfgmain}. The definition of $m$ implies that $u^*$ satisfies
\[
	F(D^2u^*(x))=m(x)^\frac{1}{p-1}
\]
for almost every $x\in B_1\cap\{u^*>0\}$. As before, an application of \cite[Lemma 2.6]{CafCraKocSwi_1996} ends the proof.

\medskip

\noindent{\bf Step 4 - }We prove that $Du^*\in L^r(B_1)$ for every $d<r<dp/(d-p)$. We start by recalling the Gagliardo-Nirenberg inequality for bounded domains. Being $u^*\in W^{2,p}_{\rm loc}(B_1)\cap W^{1,p}_{g}(B_1)$ a minimizer for \eqref{eq_mainfunc}, there exists $C_1,C_2>0$ such that 
\begin{equation}\label{Eq1:LemmaGNInequality}
	\begin{split}
		\|Du^*\|_{L^r(B_{1/2})}&\le C_1(\Lambda/\lambda,d)\left[\left(1+\|D^2g\|^{\alpha}_{L^p(B_{1})}\right)\|u^*\|^{1-\alpha}_{L^{q_1}(B_{1/2})}\right]\\
			&\quad+C_2\|u^*\|_{L^{q_2}(B_{1/2})},
	\end{split}
\end{equation} 
provided
\begin{equation}\label{Eq2:LemmaGNInequality}
\frac{1}{r}=\frac{1}{d}+\left(\frac{1}{p}-\frac{2}{d}\right)\alpha+\frac{1-\alpha}{q_1}
\end{equation}
for some $1/2<\alpha<1$ and $q_2>0$. We notice the $L^p$-norm of $D^2g$ appears in \eqref{Eq2:LemmaGNInequality} because
\[
	\begin{split}
		\left\|D^2u^*\right\|_{L^p(B_1)}&\leq\frac{1}{\lambda}\left(\int_{B_1}F(D^2u^*)^p{\rm d}x\right)^{1/p}\\
			&\leq\left(\int_{B_1}F(D^2g^*)^p{\rm d}x\right)^{1/p}\\
			&\leq\frac{\Lambda}{\lambda}\left\|D^2g^*\right\|_{L^p(B_1)},
	\end{split}
\]
because of Assumption A\ref{A3} and the fact that $g$ is a competitor for $u^*$.

Given $d\geq 2$, $p>d/2$, and $1<r<\infty$. it is always possible to find $\alpha\in(1/2,1)$ and $q_1>1$ such that \eqref{Eq2:LemmaGNInequality} is satisfied. Because $F(D^2u^*)\geq 0$, we know that for every $q>0$ there exists $C>0$ such that
\[
	\sup_{x\in B_{1/2}}u^*(x)\leq C\left\|u\right\|_{L^p(B_1)}\leq C \left\|g\right\|_{W^{2,p}(B_1)};
\]
see \cite[Theorem 4.8, item (2)]{ccbook}. Hence, \eqref{Eq1:LemmaGNInequality} becomes
\[
	\|Du^*\|_{L^r(B_1)}\leq C(\lambda,d,\Lambda,g)
\]
and a straightforward application of Morrey's Theorem completes the proof.
\end{proof}

\begin{remark}[Improved integrability for $m$]\label{rem_impint}\normalfont
Let $(u,m)$ be a weak solution to the fully nonlinear MFG \eqref{eq_mfgmain}. In case $F$ is strictly convex and $p>2$, we claim that $m\in L^\frac{p}{p-1}(B_1)$. In fact, $m$ is defined almost everywhere in $B_1$ as $m=F(D^2u)^{p-1}$. Under the strict convexity of $F$ and $p>2$, solutions to the Euler-Lagrange equation are minimizers for the functional \eqref{eq_mainfunc}. Hence, A\ref{A3} transmits the integrability of $D^2u\in L^p(B_1)$ to $m$, and the claim follows. Compare with \cite{fabesstroock}; see also \cite{bkrsbook}. Re-writing the exponent above as $1+1/(p-1)$ we quantify the improved integrability of $m$ in face of the $L^1$-regime.
\end{remark}

\begin{remark}[Improved regularity for the value function]\label{rem_vf}\normalfont
The value function is $\alpha$-H\"older-continuous, for every $\alpha\in (0,1)$. Hence, the regularity established in the former argument amounts to an improvement of the usual Krylov-Safonov regularity theory implied by uniform ellipticity.
\end{remark}

\section{Information on the free boundary}\label{section_theorem2}

In the sequel, we examine local properties of the free boundary $\partial\{u>0\}$ and present the proof of Theorem \ref{teo_fb}. The following corollary connects the regularity of minimizers with information on the free boundary. We refer to it when proving the first part of Theorem \ref{teo_fb}.

\begin{corollary}\label{cor_finiteper}
Let $x_0\in B_1$ and $0<r<\mathrm{dist}(x_0,\partial B_1)$. Suppose that $u\in W^{2,p}_{\mathrm{loc}}(B_r(x_0))$ is  non-negative and satisfies the following minimality condition: Given $p>d/2$,
\begin{equation}\label{minimalitycondition}
\mathcal{F}_{\Lambda,p}[u,B_r(x_0)]\le\mathcal{F}_{\Lambda,p}[v,B_r(x_0)],
\end{equation}
for every $v\in W^{2,p}_{\mathrm{loc}}(B_r(x_0))$ such that
\[
\begin{cases}
u\le v\hspace{.2in}&\mbox{in}\hspace{.1in} B_r(x_0)\\
u=v\hspace{.2in}&\mbox{on}\hspace{.1in} \partial B_r(x_0).
\end{cases}
\]
\noindent Assume also that A\ref{A1}-A\ref{A4} holds true. There exists $\varepsilon_0>0$ such that, for every $0<\varepsilon\le\varepsilon_0$ one finds a universal constant $C>0$ for which
\begin{equation}\label{conclusion_cor_finiteper}
	\int_{0}^\varepsilon\mathcal{H}^{d-1}\left(\partial^*\{u>t\}\cap B_r(x_0)\right) \mathrm{d}t\le\varepsilon C.
\end{equation}   
\end{corollary}

\begin{proof}

We split the argument into four steps and begin by proving that, for given $0<\varepsilon\le\varepsilon_0$ (fixed and to be chosen later), one gets
\begin{equation}\label{concluison_step1_cor_finiteper}
\int_{B_{r/2}(x_0)\cap\{0< u\le\varepsilon\}}F(D^2u)^p\mathrm dx+\Lambda\left|\{0<u\le \varepsilon\}\cap B_{r/2}(x_0)\right|\le \varepsilon C,
\end{equation}
for some universal constant $C>0$

\medskip

\noindent{\bf Step 1 - }
We begin by fixing a function $\psi\in\mathcal{C}^{\infty}(\mathbb{R}^d)$ such that
\[
\psi(x):=
\begin{cases}
0\hspace{.2in} &\mbox{if }\hspace{.1in}x\in B_{r/2}(x_0)\\
1 &\mbox{if }\hspace{.1in} x\in\mathbb{R}^d\setminus B_{r}(x_0).
\end{cases}
\]
and
\[
|D\psi|,|D^2\psi|\le C_1,
\]
for some universal constant $C_1>0$. For a fixed $0<\varepsilon_0$, consider the functions
\[
	u_{\varepsilon_0}:=(u-\varepsilon_0)^+
\]
and
\[
	\tilde{u}_{\varepsilon_0}:=\psi u+(1-\psi)u_{\varepsilon_0},
\]
which, by the minimality condition \eqref{minimalitycondition}, give
\begin{equation}\label{Eq1:Cor_finiteper}
\begin{split}
\int_{B_r(x_0)}&F(D^2u)^p\mathrm dx+\Lambda\left|\{u>0\}\cap B_r(x_0)\right|\le\\
&\le\int_{B_r(x_0)}F\left(D^2\tilde{u}_{\varepsilon_0}\right)^p\mathrm dx+\Lambda\left|\{\tilde{u}_{\varepsilon_0}>0\}\cap B_r(x_0)\right|.
\end{split}
\end{equation}

\medskip

\noindent{\bf Step 2 - }Now we calculate $F(D^2\tilde{u}_{\varepsilon})^p$ in $B_r(x_0)$; to do so, first notice that $\tilde{u}_{\varepsilon_0}$ can be writen as
\[
	\tilde{u}_{\varepsilon_0}:=\chi_{\{u>\varepsilon_0\}}\left(u-\varepsilon_0(1-\psi)\right)+\chi_{\{0\le u\le\varepsilon_0\}}\left(\psi u\right).
\]
Therefore,
\[
	D^2\tilde{u}_{\varepsilon_0}:=\chi_{\{u>\varepsilon_0\}} \left(D^2u+\varepsilon D^2\psi\right)+\chi_{\{0\le u\le\varepsilon_0\}}\left(\psi F(D^2u)+uD^2\psi+M\right),
\]
where $M:=Du^TD\psi+DuD\psi^T$. Combining the former equality with Assumption A\ref{A3} one gets
\begin{equation}\label{Eq2:Cor_finiteper}
\begin{split}
    F(D^2\tilde u_{\varepsilon_0})^p=&\chi_{\{u >\varepsilon_0\}}(F(D^2u)+\varepsilon_0\lambda|D^2\psi|)^p\\
    &+\chi_{\{0\le u\le\varepsilon_0\}}(\psi F(D^2u)+|uD^2\psi+M|)^p\\
      \le&\chi_{\{u >\varepsilon_0\}}(F(D^2u)+\varepsilon_0\lambda|D^2\psi|)^p\\
      &+\chi_{\{0\le u\le\varepsilon_0\}}(\psi F(D^2u)+|uD^2\psi|+|M|)^p.
\end{split}
\end{equation}
In the next step we detail an involved chain of inequalities used in the argument.

\medskip

\noindent{\bf Step 3 - }We combine \eqref{Eq2:Cor_finiteper} and \eqref{Eq1:Cor_finiteper}, set
\[
\Gamma_\varepsilon^+:=B_r(x_0)\cap\{u>\varepsilon_0\}\hspace{.1in}\mbox{and }\hspace{.1in}\Gamma_\varepsilon:= B_r(x_0)\cap\{0\le u\le\varepsilon_0\},
\]
and resort to Assumption A\ref{A3} to compute
\begin{align*}
	0\ge&\int_{\Gamma^+_{\varepsilon_0}}F(D^2u)^p-F(D^2\tilde u_\varepsilon)^p+\int_{\Gamma_\varepsilon}F(D^2u)^p-F(D^2\tilde u_\varepsilon)^p+\\
		&+\Lambda\left|\{0<u\le\varepsilon\}\cap B_{r/2}(x_0)\right|\\
	\ge&\int_{\Gamma^+_{\varepsilon_0}}F(D^2u)-(F(D^2u)+\varepsilon_0\lambda|D^2\psi|)^p\mathrm{d} x+\\
		&+\int_{\Gamma_{\varepsilon_0}} F(D^2u)^p-\left(\psi F(D^2u)+(\lambda\varepsilon_0|D^2\psi|+|M|)\right)^p\mathrm{d}x\\
		&+\Lambda\left|\{0\le u\le\varepsilon_0\}\cap B_{r/2}(x_0)\right|\\
	\ge&\int_{\Gamma^+_{\varepsilon_0}}F(D^2u)^p-\left(F(D^2u)^p+\lambda C_1F(D^2u)^{p-1}+\lambda C_1O(\varepsilon_0^2)\right)\mathrm{d} x\\
		&+\int_{\Gamma_{\varepsilon_0}}F(D^2u)-\left(\psi^pF(D^2u)+(\lambda C_1\varepsilon_0+|M|)F(D^2u)^{p-1}\right)\mathrm{d}x\\
		&+\int_{\Gamma_{\varepsilon_0}}O((\lambda C_1\varepsilon_0+|M|)^2)\mathrm{d}x+\Lambda\left|\{u>\varepsilon_0\}\cap B_{r/2}(x_0)\right|\\
	\ge&\int_{\Gamma^+_{\varepsilon_0}}(1-\psi^p)F(D^2u)^p\mathrm{d}x +\Lambda\left|\{0\le u\le\varepsilon_0\}\cap B_{r/2}(x_0)\right|\\
		&-\varepsilon_0\lambda C_1\int_{B_{r/2}(x_0)}F(D^2u)^{p-1}\mathrm{d}x-\lambda C_1 O(\varepsilon_0)-\\
		&-\int_{\Gamma_{\varepsilon_0}}\left(|M|F(D^2u)^{p-1}+O((\varepsilon_0\lambda C_1+|M|)^2)\right)\mathrm{d}x.
\end{align*}
Therefore
\begin{equation}\label{Eq1:Nov23}
	0\ge=\int_{\Gamma^+_{\varepsilon_0}}(1-\psi^p)F(D^2u)^p\mathrm{d}x +\Lambda\left|\{0\le u\le\varepsilon_0\}\cap B_{r/2}(x_0)\right|-(A+B),
\end{equation}
where 
\[
A:=\lambda C_1\left(\varepsilon_0\int_{B_{r/2}(x_0)}F(D^2u)^{p-1}\mathrm{d}x+O(\varepsilon_0)\right),
\]
and
\[
B:=\int_{\Gamma_{\varepsilon_0}}\left(|M|F(D^2u)^{p-1}+O((\varepsilon_0\lambda C_1+|M|)^2)\right)\mathrm{d}x.
\]
The argument in the proof of Theorem \ref{teo_existence} ensures that 
\[
	\int_{B_r(x_0)}F(D^2u)^{p-1}<\infty.
\]
Therefore, there exists a universal constant $C_2>0$ such that
\begin{equation}\label{Eq2:Nov23}
A\le C_2 O(\varepsilon_0).
\end{equation}
Recall that $M=Du^TD\psi+DuD\psi^T$. Because of the bounds imposed on $\psi$ and the estimates available for $Du$, we conclude
\[
	|M|\le2C_1\left\|Du\right\|_{L^\infty(B_{1/2})}.
\]
Hence, by requiring $\varepsilon_0<1$, the H\"older inequality and the Theorem \ref{teo_existence} yield
\begin{align*}
\int_{B_{r/2}(x_0)}\left(\varepsilon_0\lambda C_1+|M|\right)^2\mathrm{d}x\le&\int_{B_{r/2}(x_0)}\left(\varepsilon_0^2\lambda^2 C_1^2+4\varepsilon_0\lambda C_1^2|Du|\right)^2\mathrm{d}x\\
&+4C_1\int_{B_{r/2}(x_0)}|Du|^2\mathrm{d}x\\
\le&\varepsilon_0\left(4\lambda C_1^2\|Du\|_{L^p(B_r(x_0))}\|Du\|_{L^{\frac p{p-1}}(B_r(x_0))}\right)\\
&+4C_1\|Du\|_{L^p(B_r(x_0))}\|Du\|_{L^{\frac p{p-1}}(B_r(x_0))}\\
&+\varepsilon_0\lambda_1^2C_1^2\omega_d\\
\le& C_3\varepsilon_0+C_4,
\end{align*}
where $C_3$ and $C_4$ are positive, universal constants. Hence, there exists a universal constant $C_5>0$ such that
\begin{align*}
B&\le2C_1\int_{\Gamma_{\varepsilon_0}}|Du|F(D^2u)^{p-1}\mathrm{d}x+C_5O(\varepsilon_0)\\
&\le2C_1\|Du\|_{L^p(B_r(x_0))}\left(\int_{B_r(x_0)}F(D^2u)^p\mathrm{d}x\right)^{1-\frac 1p}+C_5O(\varepsilon_0)\\
&\le2C_1\lambda\|Du\|_{L^p(B_r(x_0))}(\|D^2u\|_{W^{2,p}(B_r(x_0))})^{1-\frac 1p}+C_5O(\varepsilon_0).\\
\end{align*}
Thus one finds $C_6>0$ a universal constant for which
\begin{equation}\label{Eq3:Nov23}
B\le C_6\|Du\|_{L^p(B_r(x_0))}+C_5O(\varepsilon_0).
\end{equation} 
By combining \eqref{Eq1:Nov23}, \eqref{Eq2:Nov23} and \eqref{Eq3:Nov23} we have that
\begin{equation}
\begin{split}
0\ge&\int_{B_{r/2}(x_0)\cap\{0\le u\le\varepsilon_0\}}F(D^2u)^p\mathrm{d}x+\Lambda\left|\{0\le u\le\varepsilon_0\}\cap B_{r/2}\right|\\
&+C_6\|Du\|_{L^p(B_r(x_0))}+C_7O(\varepsilon_0).
\end{split}
\end{equation}
Finally, set $\varepsilon_0=O(\|Du\|_{L^p})$. Theorem \ref{teo_existence} ensures that for every $0<\varepsilon<\varepsilon_0$ one obtains
\[
0\ge\int_{\{0\le u\le\varepsilon\}\cap B_{r/2}(x_0)}F(D^2u)^p+\Lambda\left|\{0\le u\le\varepsilon\}\cap B_{r/2}(x_0)\right|-\varepsilon C
\] 
where the constant $C>0$ is now universal.

\medskip

\noindent{\bf Step 4 - }Now, Assumption A\ref{A3} yields
\begin{equation*}\label{Eq2:TheoremFinitePerimeter}
\frac1{\lambda^p}\int_{\{0\le u\le\varepsilon\}\cap B_{r/2}(x_0)}\abs{D^2u}^p\mathrm{d}x\le\int_{\{0\le u\le\varepsilon\}\cap B_{r/2}r(x_0)}\left( F(D^2u)\right)^p\mathrm{d}x.
\end{equation*}

To estimate $\|Du\|_{L^p(\{0<u\le\varepsilon\}\cap B_{r/2}(x_0))}$ we recall the Galiardo-Nirenberg inequality for bounded domains. If $u\in W^{2,p}_{\mathrm{loc}}(B_1)$, there exists universal constants $C_4,C_5>0$ such that
\begin{equation}\label{Eq3:Cor_finiteper}
\|Du\|_{L^p(\Gamma_{\varepsilon})}\le
 C_4\|D^2u\|^{\alpha}_{L^p(\Gamma_{\varepsilon})}\|u\|_{L^{q_1}(\Gamma_{\varepsilon})}^{1-\alpha}
+C_5\|u\|_{L^{q_2}(\Gamma_{\varepsilon})},
\end{equation}
provided 
\[
\frac 1p=\frac 1d+\frac1{q_1}-\frac\theta{d(1-\theta)}
\]
for some $1/2<\alpha<1$ and $q_2>0$, where $\Gamma_\varepsilon:=\{0<u\le\varepsilon\}\cap B_{r/2}(x_0))$. Given $p>d/2$ and $d\ge 2$ it is always possible to find $\alpha\in (1/2,1)$ and $q_1>p$ satisfying \eqref{Eq3:Cor_finiteper}, which implies that there exists $C_6:=C_6(\alpha,p,d,\|u\|_{L^p(\Gamma_\varepsilon)})>0$ such that
\[
C_6\|Du\|_{L^p(\Gamma_\varepsilon)}\le\|D^2u\|_{L^p(\Gamma_\varepsilon)}.
\]
Also,
\begin{align*}\label{Eq4:TheoremFinitePerimeter}
\left(\int_{\Gamma_\varepsilon}\abs{Du}\mathrm{d}x\right)^{p}&
\le\abs{\Gamma_\varepsilon}^{p/p^\prime}\int_{\Gamma_\varepsilon}\abs{Du}^p\mathrm{d}x.
\end{align*}
By combining the former inequalities, we get
\begin{equation*}\label{Eq5:TheoremFinitePerimeter}
\int_{\{0\le u\le\varepsilon\}\cap B_{r/2}(x_0)}\abs{Du}\mathrm{d}x<\varepsilon \tilde C 
,\end{equation*}
where $\tilde C:=\tilde C(n,p,\alpha,\lambda,C_1,\|u\|_{L^p(\Gamma_\varepsilon)})>0$ is an universal constant. A straightforward application of the area formula yields
\[
	\int_{0}^\varepsilon\mathcal{H}^{d-1}\left(\partial^*(\{u>t\})\cap B_{r/2}(x_0)\right) \mathrm{d}t\le\varepsilon C
\]
and finishes the proof.
\end{proof}

\subsection{Proof of Theorem \ref{teo_fb}}

In what follows, we organize the previous results and present the proof of Theorem \ref{teo_fb}. The Sobolev regularity of minimizers and its corollary leads to the finite perimeter of the reduced free boundary. 

\begin{proof}[Proof of Theorem \ref{teo_fb}]
Because of Corollary \ref{cor_finiteper}, there exists a sequence $(\delta_n)_{n\in\mathbb{N}}\subset\mathbb{R}$ of real numbers, with $\delta_n\to0$, satisfying
\[
\mathcal{H}^{d-1}(\partial^*(u>\delta_n))\le C,
\]
for every $n\in\mathbb{N}$. Standard convergence results ensure that
\[
\lim_{n\to\infty}\int_{B_1}\chi_{\{u>\delta_n\}}\mathrm{d}x=\int_{B_1}\chi_{\{u>0\}}\mathrm{d}x.
\]
Finally, the lower semi-continuity of the perimeter implies
\[
\mathcal{H}^{d-1}(\partial^*(\{u>0\}))\le C
\]
and yields the conclusion. 
\end{proof}

\section{Perturbation analysis via $\Gamma$-convergence}\label{Section:Gammaconvergence}

This section specializes the operator $F$ to be the norm and considers small values of the parameter $\Lambda$ in \eqref{eq_mainfunc}. We regard the functional 
\begin{equation}\label{eq_fl}
	\mathcal{G}_{\Lambda,p}[v]:=\int_{B_1\cap\{u>0\}}\|D^2v\|^p\mathrm{d}x+\Lambda|\{u>0\}\cap B_1|
\end{equation}
as a free boundary perturbation of
\begin{equation}\label{eq_f0}
	\mathcal{G}_{0,p}[v]:=\int_{B_1}\|D^2v\|^p\mathrm{d}x.
\end{equation}
Denote with $u_\Lambda$ a minimizer for \eqref{eq_fl} and with $u_0$ the minimizer for \eqref{eq_f0}. We are interested in the behavior of $(u_\Lambda)_{\Lambda>0}$, as $\Lambda\to 0$. In particular, we search for the topologies where the convergence $u_\Lambda\to u_0$ is available. Our starting point is a $\Gamma$-convergence result. Namely, we first prove that $\mathcal{G}_{\Lambda,p}\xrightarrow{\Gamma}\mathcal{G}_{0,p}$ as $\Lambda\to 0$. 

Although interesting on its own merits, the $\Gamma$-convergence problem is motivated by its potential consequences on the regularity theory of minimizers to \eqref{eq_fl}. Indeed, we use properties of $\Gamma$-convergence to prove an approximation result. It states that minimizers are close, in a suitable topology, to a minimizer of the $\Gamma$-limit (see Proposition \ref{AppLemma}). This type of approximation result is central to perturbative methods in regularity theory; see, for instance, \cite{caffarelli89, ccbook}. We believe the $\Gamma$-convergence analysis can be used as an ingredient in the study of improved regularity for Hessian-dependent functionals through approximation methods.
We proceed with some auxiliary lemmas.

\begin{lemma}[Equicoerciveness]\label{Lemma:Equicoersiveness}
Let $p>1$ be fixed and $(\Lambda_n)_{n\in\mathbb{N}}$ be a sequence such that $\Lambda_n\to 0$, as $n\to\infty$. Define the functional $\mathcal{G}_{n,p}:L^p(B_1)\to \mathbb{R}$ as
\begin{equation*}
	\mathcal{G}_{n,p}[v]:=\int_{B_1}\|D^2v\|^p\mathrm{d}x+\Lambda_n|\{v>0\}\cap B_1|
\end{equation*}
if $v\in W^{2,p}(B_1)$, and $\mathcal{G}_{n,p}[v]:=+\infty$ in case $v\in L^p(B_1)\setminus W^{2,p}(B_1)$. Let $(u_m)_{m\in\mathbb{N}}\subset L^p(B_1)$ be such that 
\begin{equation}\label{eq_coerc1}
	\mathcal{G}_{n,p}[u_m]\leq C,
\end{equation}
for every $m\in\mathbb{N}$ and some $C>0$. Then $\|u_m\|_{W^{2,p}(B_1)}\leq C$,
uniformly in $m\in\mathbb{N}$, for some $C>0$. 
\end{lemma}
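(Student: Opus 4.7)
The plan is to peel the Hessian bound off the coercivity hypothesis and then upgrade it to a full Sobolev bound via the boundary condition inherited from the admissible class of the $\Gamma$-convergence scheme. Since $\Lambda_n\ge 0$, the measure term is non-negative, so the assumption $\mathcal{G}_{n,p}[u_m]\le C$ directly yields
\[
\int_{B_1}\|D^2u_m\|^p\,dx\le C
\]
uniformly in $m$, and the finiteness of $\mathcal{G}_{n,p}[u_m]$ forces $u_m\in W^{2,p}_{loc}(B_1)$.

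Next, I would invoke the (implicit) boundary condition $u_m-g\in W^{1,p}_0(B_1)$ carried along from the admissible class for the functionals $\mathcal{F}_{\Lambda,p}$ introduced earlier. Setting $w_m:=u_m-g$, the function $w_m$ has vanishing trace on $\partial B_1$ and satisfies
\[
\|D^2w_m\|_{L^p(B_1)}\le \|D^2u_m\|_{L^p(B_1)}+\|D^2g\|_{L^p(B_1)}\le C.
\]
Applying Lemma~\ref{poincare} (whose $W^{1,p}_0$-incarnation, obtained after the shift by $g$, is the classical Poincar\'e inequality) to $w_m$ gives $\|w_m\|_{L^p(B_1)}\le C_P\|Dw_m\|_{L^p(B_1)}$.

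To climb from a Hessian bound to a gradient bound, I would use a Gagliardo--Nirenberg interpolation inequality of the form
\[
\|Dw_m\|_{L^p(B_1)}\le C\,\|w_m\|_{L^p(B_1)}^{1/2}\|D^2w_m\|_{L^p(B_1)}^{1/2}.
\]
Plugging the Poincar\'e bound into the right-hand side and absorbing yields $\|Dw_m\|_{L^p(B_1)}\le C\|D^2w_m\|_{L^p(B_1)}$, hence $\|w_m\|_{W^{2,p}(B_1)}\le C$ uniformly in $m$. The triangle inequality $\|u_m\|_{W^{2,p}(B_1)}\le \|w_m\|_{W^{2,p}(B_1)}+\|g\|_{W^{2,p}(B_1)}$ then finishes the argument.

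The main technical point I expect to have to highlight is that the Hessian control alone is genuinely insufficient: constants and affine functions lie in the kernel of $D^2$, so without the prescribed trace one cannot hope to bound $\|u_m\|_{W^{2,p}}$. The interpolation--Poincar\'e absorption loop described above, anchored by the shifted Poincar\'e estimate of Lemma~\ref{poincare}, is precisely what bridges the missing gap between $\|D^2u_m\|_{L^p}$ and the lower-order Sobolev norms.
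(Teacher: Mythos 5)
Your proof takes essentially the same route as the paper: peel off the Hessian $L^p$ bound from the coercivity hypothesis, then upgrade it to a full $W^{2,p}$ bound using Poincar\'e (Lemma~\ref{poincare}) and interpolation. The paper compresses the second step into the phrase ``by Lemma~\ref{poincare} and standard inequalities available for Sobolev spaces,'' while you spell out the Gagliardo--Nirenberg plus Poincar\'e absorption loop. You also do something the paper leaves entirely implicit: you observe that the lemma as literally stated, with $(u_m)\subset L^p(B_1)$ and no boundary condition, would be false (a sequence of diverging constants has zero Hessian and bounded $\mathcal{G}_{n,p}$-energy), and that the hidden hypothesis $u_m\in W^{1,p}_g(B_1)$ inherited from the admissible class is what makes the conclusion true. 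Flagging that is a genuine improvement over the paper's exposition. One small caveat on the detail you added: the pure interpolation inequality $\|Dw\|_{L^p}\le C\|w\|_{L^p}^{1/2}\|D^2w\|_{L^p}^{1/2}$ fails for general $W^{2,p}(B_1)$ functions on a bounded domain (affine $w$ is a counterexample); the usual Gagliardo--Nirenberg estimate on $B_1$ carries a lower-order term $+C\|w\|_{L^p}$, and the absorption via Poincar\'e then needs a Young-inequality split rather than the clean square-root cancellation you wrote. Since $w_m\in W^{1,p}_0(B_1)$ the conclusion $\|Dw_m\|_{L^p}\le C\|D^2w_m\|_{L^p}$ is still correct, but it deserves the extra line; as written the absorption step is slightly too optimistic. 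This is comparable in rigor to the paper's hand-wave, so it does not change the verdict that the argument is sound and matches the paper's.
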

\begin{proof}
It follows from \eqref{eq_coerc1} that 
\[
	\int_{B_1}\|D^2u_m\|^p\mathrm{d}x\leq \mathcal{G}_{n,p}[u_m]\leq C.
\]
By Lemma \ref{poincare} and standard inequalities available for Sobolev spaces \cite{CiaMaz_2016}, there exists $C>0$ such that
\[
\|u_m\|_{W^{2,p}(B_1)}\le C,
\]
uniformly in $m\in\mathbb{N}$.
\end{proof}

Before continuing, we introduce the functional $\mathcal{G}_{0,p}:L^p(B_1)\to\mathbb{R}$, given by
\begin{equation*}
	\mathcal{G}_{0,p}[v]:=\int_{B_1}\|D^2v\|^p\mathrm{d}x
\end{equation*}
if $v\in W^{2,p}(B_1)$, and $\mathcal{G}_{0,p}[v]:=+\infty$ if $v\in L^p(B_1)\setminus W^{2,p}(B_1)$. The next lemma relates $\mathcal{G}_{n,p}$ and $\mathcal{G}_{0,p}$.

\begin{lemma}\label{Lemma:Prop2Gamma}
Let $p>1$ be fixed and $(\Lambda_n)_{n\in\mathbb{N}}$ be a sequence of real numbers so that $\Lambda_n\to 0$, as $n\to\infty$. For each $u\in L^p(B_1)$ there exists a sequence $(u_n)_{n\in\mathbb{N}}\in L^p(B_1)$ converging strongly to $u$ in $L^p(B_1)$, such that
\begin{equation}\label{eq_gamma2}
\lim_{n\to\infty}\mathcal{G}_{n,p}[u_n]=\mathcal{G}_{0,p}[u].
\end{equation}
\end{lemma}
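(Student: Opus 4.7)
The plan is to produce a trivial recovery sequence, namely the constant sequence $u_n\equiv u$, and verify the required convergence in two cases depending on whether $u$ lies in $W^{2,p}_{loc}(B_1)$ or not. Because $\mathcal{G}_{n,p}$ and $\mathcal{G}_{0,p}$ differ only by the measure term $\Lambda_n|\{u>0\}\cap B_1|$, which is bounded uniformly by $\Lambda_n|B_1|$, the perturbation is of order $\Lambda_n$ and vanishes as $n\to\infty$. This is the usual situation when the $\Gamma$-limit differs from the prelimit functional only by a vanishing, pointwise-bounded additive term.

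First, I would dispose of the trivial case $u\in L^p(B_1)\setminus W^{2,p}_{loc}(B_1)$. Here $\mathcal{G}_{0,p}[u]=+\infty$ by definition, and the choice $u_n:=u$ yields $\mathcal{G}_{n,p}[u_n]=+\infty$ for every $n$, so both sides of \eqref{eq_gamma2} equal $+\infty$ and the required limit holds. The $L^p$-convergence $u_n\to u$ is automatic since the sequence is constant.

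Second, in the case $u\in W^{2,p}_{loc}(B_1)$, I again take $u_n:=u$; then $L^p$-convergence is immediate. By definition,
\[
	\mathcal{G}_{n,p}[u]=\int_{B_1}\|D^2u\|^p\,\mathrm{d}x+\Lambda_n|\{u>0\}\cap B_1|.
\]
The first term equals $\mathcal{G}_{0,p}[u]$ and is a finite number, while the second is bounded above by $\Lambda_n|B_1|$, which tends to $0$ as $\Lambda_n\to 0$. Passing to the limit in $n$ yields $\lim_{n\to\infty}\mathcal{G}_{n,p}[u_n]=\mathcal{G}_{0,p}[u]$, as required.

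There is essentially no obstacle here: the constant recovery sequence works because the penalization $\Lambda_n|\{v>0\}\cap B_1|$ is uniformly controlled on the class of admissible competitors by $\Lambda_n|B_1|\to 0$. The only subtlety worth double-checking is the convention assigning the value $+\infty$ to $\mathcal{G}_{n,p}$ outside $W^{2,p}_{loc}(B_1)$, which ensures both functionals coincide in the "forbidden" regime and makes the two-case split above exhaustive. Any nontriviality in the $\Gamma$-convergence of $\mathcal{G}_{\Lambda,p}$ to $\mathcal{G}_{0,p}$ is therefore confined to the $\Gamma$-liminf (equicoerciveness plus weak lower semicontinuity, already prepared by Lemma \ref{Lemma:Equicoersiveness} and Lemma \ref{lemma:semicontinuity}), not to the $\Gamma$-limsup treated here.
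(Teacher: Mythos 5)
Your proposal is correct and follows essentially the same route as the paper's proof: both take the constant recovery sequence $u_n:=u$, split into the cases $u\notin W^{2,p}_{loc}(B_1)$ (both sides $+\infty$) and $u\in W^{2,p}_{loc}(B_1)$ (the term $\Lambda_n|\{u>0\}\cap B_1|$ vanishes since $\Lambda_n\to 0$). No substantive difference.
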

\begin{proof}
Let $u\in L^p(B_1)$ be given and $u_n:=u$, for every $n\in\mathbb{N}$. If $u\in L^p(B_1)\setminus W^{2,p}(B_1)$, we get
\[
	\mathcal{G}_{n,p}[u_n]=+\infty\hspace{.2in}\mbox{and}\hspace{.2in}\mathcal{G}_{0,p}[u]=+\infty,
\]
and \eqref{eq_gamma2} is immediately satisfied. Conversely, suppose $u\in W^{2,p}(B_1)$. In that case, we have
\begin{align*}
	\lim_{n\to\infty}\mathcal{G}_{n,p}[u_n]&=\int_{B_1} \|D^2u\|^p\mathrm{d}x+\lim_{n\to\infty}\Lambda_n|\{u>0\}\cap B_1|=\mathcal{G}_{0,p}[u].
\end{align*}
\end{proof}

\begin{lemma}\label{Lemma:Prop3Gamma}
Let $p>1$ be fixed and $(\Lambda_n)_{n\in\mathbb{N}}$ be a sequence of real numbers so that $\Lambda_n\to 0$, as $n\to\infty$. Given $(u_n)_{n\in\mathbb{N}}\subset L^p(B_1)$ and $u\in L^p(B_1)$, with $u_n\to u$ strongly in $L^p(B_1)$, we have
\begin{equation}\label{Eq2:LemmaProp2Gamma}
	\mathcal{G}_{0,p}[u]\le\liminf_{n\to\infty}\mathcal{G}_{n,p}[u_n].
\end{equation}
\end{lemma}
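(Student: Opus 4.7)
The plan is to reduce the claim to the weak lower-semicontinuity of the convex functional $v\mapsto \int_{B_1}\|D^2v\|^p\mathrm{d}x$ along a suitable subsequence, and then to discard the non-negative measure term.

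First, I would dispose of the trivial case $\liminf_{n\to\infty}\mathcal{G}_{n,p}[u_n]=+\infty$, in which \eqref{Eq2:LemmaProp2Gamma} holds automatically. Assume henceforth the liminf is finite and, passing to a subsequence (not relabeled), that it is attained as a limit and that $\mathcal{G}_{n,p}[u_n]\le C$ for every $n\in\mathbb{N}$. By definition of $\mathcal{G}_{n,p}$, this forces $u_n\in W^{2,p}_{loc}(B_1)$ for all $n$, and Lemma \ref{Lemma:Equicoersiveness} yields a uniform bound $\|u_n\|_{W^{2,p}(B_1)}\le C$.

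Next, I extract a further subsequence along which $u_n\rightharpoonup u^*$ weakly in $W^{2,p}(B_1)$ for some $u^*\in W^{2,p}_{loc}(B_1)\cap W^{1,p}_g(B_1)$, so in particular $D^2u_n\rightharpoonup D^2u^*$ in $L^p(B_1,S(d))$. Because $W^{2,p}(B_1)\hookrightarrow L^p(B_1)$ is continuous, the weak convergence in $W^{2,p}$ implies weak convergence in $L^p$; combined with the assumed strong convergence $u_n\to u$ in $L^p(B_1)$, uniqueness of weak limits gives $u^*=u$. Therefore $u\in W^{2,p}_{loc}(B_1)$ and $D^2u_n\rightharpoonup D^2u$ in $L^p(B_1,S(d))$.

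The final ingredient is lower-semicontinuity: since $M\mapsto \|M\|^p$ is convex on $S(d)$ and continuous, the functional $v\mapsto \int_{B_1}\|D^2v\|^p\mathrm{d}x$ is weakly lower-semicontinuous on $W^{2,p}(B_1)$ (this is the special case $F(M)=\|M\|$ of Lemma \ref{lemma:semicontinuity}). Hence
\[
\int_{B_1}\|D^2u\|^p\mathrm{d}x\le\liminf_{n\to\infty}\int_{B_1}\|D^2u_n\|^p\mathrm{d}x\le\liminf_{n\to\infty}\mathcal{G}_{n,p}[u_n],
\]
where in the last step I have used that $\Lambda_n|\{u_n>0\}\cap B_1|\ge 0$. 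This is precisely $\mathcal{G}_{0,p}[u]\le\liminf_{n\to\infty}\mathcal{G}_{n,p}[u_n]$, and since a standard subsequence argument shows the inequality for the original sequence, the proof is complete.

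The only non-routine step is justifying $u\in W^{2,p}_{loc}(B_1)$ starting from $L^p$-strong convergence; this is handled cleanly by the equicoerciveness in Lemma \ref{Lemma:Equicoersiveness} combined with weak-limit uniqueness. Dropping the penalization term at the end is painless precisely because it is non-negative and $\Lambda_n\to 0$ is not even needed here.
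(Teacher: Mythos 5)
Your proof takes essentially the same route as the paper's: obtain a uniform $W^{2,p}$-bound, pass to a weak limit, invoke weak lower semicontinuity of the convex functional $v\mapsto\int_{B_1}\|D^2v\|^p\,\mathrm{d}x$, and drop the non-negative penalization term. You are, if anything, more careful than the paper itself: you use the correct dichotomy (finite vs.\ infinite $\liminf$) rather than splitting according to whether all $u_n$ lie in $W^{2,p}(B_1)$ or all lie outside it (which does not exhaust the cases and does not by itself justify $\mathcal{G}_{n,p}[u_n]\le C$), you explicitly pass to a subsequence realizing the $\liminf$ before extracting the uniform bound, and you explicitly identify the weak $W^{2,p}$-limit with $u$ via the strong $L^p$-convergence and uniqueness of weak limits — a step the paper leaves implicit.
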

\begin{proof}
To deduce \eqref{Eq2:LemmaProp2Gamma} from the strong convergence, suppose first $(u_n)_{n\in\mathbb{N}}\subset L^p(B_1)\setminus W^{2,p}(B_1)$. Then 
\[
	\int_{B_1}\|D^2u_n\|^p\mathrm{d}x=+\infty
\] 
and \eqref{Eq2:LemmaProp2Gamma} follows. Otherwise, suppose $(u_n)_{n\in\mathbb{N}}\subset W^{2,p}(B_1)$. 

Through a subsequence, if necessary, we can suppose the $\liminf$ in \eqref{Eq2:LemmaProp2Gamma} is in fact a limit. If such a limit is not finite, then \eqref{Eq2:LemmaProp2Gamma} trivially holds. Suppose otherwise; if this limit is finite, there exists $C>0$ such that
\[
	\begin{split}
		\mathcal{G}_{n,p}[u_n]\leq C
	\end{split}
\]
for every $n\in\mathbb{N}$, large enough (and therefore for every $n\in\mathbb{N}$). As a consequence, $\|D^2u_n\|_{L^p(B_1)}$ is uniformly bounded; evoking once again standard inequalities for Sobolev functions, one infers the existence of a constant $C>0$ such that
\[
	\|u_n\|_{W^{2,p}_{\rm loc}(B_1)}\leq C.
\]

The weakly lower semi-continuity of the $L^p$-norm yields
\begin{align*}
	\mathcal{G}_{0,p}[u]&=\int_{B_1}\|D^2u\|^p\mathrm{d}x\le\liminf_{n\to\infty}\int_{B_1}\|D^2u_n\|^p\mathrm{d}x\leq\liminf_{n\to\infty}\mathcal{G}_{n,p}[u_n]
\end{align*}
and completes the proof.
\end{proof}

By combining Lemmas \ref{Lemma:Equicoersiveness}, \ref{Lemma:Prop2Gamma}, and \ref{Lemma:Prop3Gamma}, we derive the following theorem.

\begin{theorem}[Gamma Convergence]\label{Theorem:Gammaconvergence}
Let $p>d/2$ be fixed and $(\Lambda_n)_{n\in\mathbb{N}}$ be a sequence of real numbers so that $\Lambda_n\to 0$, as $n\to\infty$. Then $\mathcal{G}_{n,p}\xrightarrow{\Gamma}\mathcal{G}_{0,p}$.
\end{theorem}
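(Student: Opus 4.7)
The strategy is simply to verify the two defining inequalities of $\Gamma$-convergence in $L^p(B_1)$, using the three lemmas established just before the theorem. Recall that $\mathcal{G}_{n,p}\xrightarrow{\Gamma}\mathcal{G}_{0,p}$ in $L^p(B_1)$ means that for every $u\in L^p(B_1)$: (i) for every sequence $u_n\to u$ strongly in $L^p(B_1)$ one has $\mathcal{G}_{0,p}[u]\le\liminf_{n\to\infty}\mathcal{G}_{n,p}[u_n]$ (the liminf inequality), and (ii) there exists a \emph{recovery sequence} $u_n\to u$ strongly in $L^p(B_1)$ with $\limsup_{n\to\infty}\mathcal{G}_{n,p}[u_n]\le\mathcal{G}_{0,p}[u]$.

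\textbf{Step 1 (liminf inequality).} Fix $u\in L^p(B_1)$ and any $(u_n)_{n\in\mathbb{N}}\subset L^p(B_1)$ with $u_n\to u$ in $L^p(B_1)$. Lemma \ref{Lemma:Prop3Gamma} yields directly
\[
\mathcal{G}_{0,p}[u]\le\liminf_{n\to\infty}\mathcal{G}_{n,p}[u_n],
\]
which is exactly condition (i). Note that the case where $\liminf_{n\to\infty}\mathcal{G}_{n,p}[u_n]=+\infty$ is trivial, and otherwise one may extract a subsequence along which the sequence is uniformly bounded; Lemma \ref{Lemma:Equicoersiveness} then guarantees that the $u_n$ in question are uniformly bounded in $W^{2,p}$, which justifies the passage to the limit used inside the proof of Lemma \ref{Lemma:Prop3Gamma}.

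\textbf{Step 2 (recovery sequence).} For the same $u\in L^p(B_1)$, Lemma \ref{Lemma:Prop2Gamma} produces a sequence $(u_n)_{n\in\mathbb{N}}\subset L^p(B_1)$ with $u_n\to u$ strongly in $L^p(B_1)$ and
\[
\lim_{n\to\infty}\mathcal{G}_{n,p}[u_n]=\mathcal{G}_{0,p}[u].
\]
In particular $\limsup_{n\to\infty}\mathcal{G}_{n,p}[u_n]\le\mathcal{G}_{0,p}[u]$, which is condition (ii). (The explicit recovery sequence constructed in Lemma \ref{Lemma:Prop2Gamma} is the trivial one $u_n\equiv u$, which works because the perturbation $\Lambda_n|\{u>0\}\cap B_1|$ vanishes as $n\to\infty$.)

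\textbf{Conclusion and main obstacle.} Combining Steps 1 and 2 verifies the definition of $\Gamma$-convergence, so $\mathcal{G}_{n,p}\xrightarrow{\Gamma}\mathcal{G}_{0,p}$. There is essentially no obstacle at the level of the theorem itself, since all the substantive work has been pushed into the preceding lemmas; the only point to be careful about is the presentation of the dichotomy in the liminf step (the case $u\notin W^{2,p}_{loc}$ or $\liminf=+\infty$ versus the case of bounded energies), and to remark that the equicoerciveness supplied by Lemma \ref{Lemma:Equicoersiveness} is what ensures that $\Gamma$-convergence, when combined with the uniform bound $\mathcal{G}_{n,p}[u_n]\le C$ enjoyed by minimizing sequences, will ultimately transfer to convergence of minimizers $u_{\Lambda_n}\to u_0$, which is the real motivation for establishing Theorem \ref{Theorem:Gammaconvergence}.
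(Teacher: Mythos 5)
Your proof is correct and follows exactly the route the paper intends: the paper states Theorem~\ref{Theorem:Gammaconvergence} as an immediate consequence of combining Lemmas~\ref{Lemma:Equicoersiveness}, \ref{Lemma:Prop2Gamma}, and \ref{Lemma:Prop3Gamma}, and you verify the liminf inequality via Lemma~\ref{Lemma:Prop3Gamma} and the recovery-sequence inequality via Lemma~\ref{Lemma:Prop2Gamma}, precisely as intended. Your closing remark that equicoerciveness is what makes the $\Gamma$-limit useful (transferring to convergence of minimizers, as later exploited in Proposition~\ref{AppLemma}) is an accurate reading of the role of Lemma~\ref{Lemma:Equicoersiveness}, even though it is not strictly needed to verify the two $\Gamma$-convergence inequalities themselves.
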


In the sequel, we explore a consequence of the $\Gamma$-convergence result. It consists of an approximation result by $C^{1,\alpha}$-regular functions.

\subsection{Regular approximations}\label{subsec_regapprox}

We have proved that minimizers for \eqref{eq_mainfunc} are H\"older-continuous. However, the use of $\Gamma$-convergence allows us to arbitrarily approximate minimizers by $C^{1,\alpha}$-regular functions. This is the content of the following proposition
\begin{proposition}[$C^{1,\alpha}$-approximation]\label{AppLemma}
Let $p>d/2$ be fixed. Given $\delta>0$, there exists $\varepsilon>0$ such that, if $\Lambda<\varepsilon$ and $u\in W^{2,p}(B_1)\cap W^{1,p}_g(B_1)$ be a minimizer for \eqref{eq_mainfunc}, one can find $h\in C^{1,\alpha}_{\rm loc}(B_1)$ satisfying
\begin{equation*}\label{Eq1:AppLemma}
\|u-h\|_{W_g^{1,p}(B_1)}<\delta
\end{equation*}
\end{proposition}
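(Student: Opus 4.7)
My plan is to combine the $\Gamma$-convergence of Theorem \ref{Theorem:Gammaconvergence} with the equicoerciveness of Lemma \ref{Lemma:Equicoersiveness}, and then insert a smooth-approximation step via mollification.

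First, I would fix an arbitrary sequence $\Lambda_n \to 0$ together with corresponding minimizers $u_n$ of $\mathcal{G}_{\Lambda_n,p}$. Comparing with the boundary datum yields $\mathcal{G}_{\Lambda_n,p}[u_n]\leq \mathcal{G}_{\Lambda_n,p}[g]\leq C$, uniformly in $n$, since $g\in W^{2,p}_{loc}(B_1)$ and $\Lambda_n |B_1|$ is bounded. Lemma \ref{Lemma:Equicoersiveness} then provides a uniform $W^{2,p}$-bound on $(u_n)$. Because $p>d/2$ and $B_1$ is bounded, Rellich-Kondrachov yields, up to a subsequence, strong $W^{1,p}$-convergence and weak $W^{2,p}$-convergence to some $u_\star\in W^{2,p}_{loc}(B_1)\cap W^{1,p}_g(B_1)$. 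The standard fundamental theorem of $\Gamma$-convergence applied to Theorem \ref{Theorem:Gammaconvergence} identifies $u_\star$ as a minimizer of $\mathcal{G}_{0,p}$.

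Second, I would approximate $u_\star$ by a $C^{1,\alpha}_{loc}$-regular function through mollification. Since $u_\star\in W^{2,p}_{loc}(B_1)$ and $u_\star-g\in W^{1,p}_0(B_1)$, a standard extension combined with convolution against a mollifier produces $h_\eta:= u_\star*\rho_\eta\in C^{\infty}_{loc}(B_1)\subset C^{1,\alpha}_{loc}(B_1)$ with $h_\eta\to u_\star$ in the $W^{1,p}_g(B_1)$-topology as $\eta\to 0$. A triangle inequality then closes the argument: given $\delta>0$, first fix $\eta>0$ so that $\|u_\star-h_\eta\|_{W^{1,p}_g(B_1)}<\delta/2$, and then choose $\varepsilon>0$ small enough that $\Lambda<\varepsilon$ implies $\|u-u_\star\|_{W^{1,p}_g(B_1)}<\delta/2$ for the associated minimizer $u$; set $h:=h_\eta$.

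The step demanding the most care is the last one, since $\mathcal{G}_{0,p}$ may admit several minimizers and the $\Gamma$-convergence argument selects $u_\star$ only up to a subsequence. I would handle this by a contradiction/extraction argument: if the conclusion failed for some $\delta_0>0$, one would obtain a sequence $\Lambda_n\to 0$ with minimizers $u_n$ satisfying $\|u_n-h\|_{W^{1,p}_g(B_1)}\geq \delta_0$ for \emph{every} $h\in C^{1,\alpha}_{loc}(B_1)$; the uniform $W^{2,p}$-bound again allows extraction of a $W^{1,p}$-convergent subsequence, whose limit is a minimizer of $\mathcal{G}_{0,p}$ admitting a mollified $C^{\infty}_{loc}$-approximation, contradicting the supposed lower bound. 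This reasoning avoids any appeal to uniqueness of the $\Gamma$-limit minimizer.
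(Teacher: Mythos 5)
Your argument is correct and mirrors the paper's in structure (contradiction, extraction of a subsequence, uniform $W^{2,p}$-bound from equicoerciveness, identification of the limit $u_\infty$ as a minimizer of $\mathcal{G}_{0,p}$ via the fundamental theorem of $\Gamma$-convergence). You were also right to distrust the direct construction and switch to the contradiction/extraction argument, since $\mathcal{G}_{0,p}$ may have several minimizers and $\Gamma$-convergence only yields subsequential convergence; the paper does exactly this.

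Where you genuinely diverge is the final step. The paper invokes the $C^{1,\alpha}_{loc}$-regularity theory for minimizers of the Hessian-dependent functional $\mathcal{G}_{0,p}$ (the result in \cite{AndPim_2020}) and takes $h:=u_\infty$ itself; this is the reason the approximant can be chosen in $C^{1,\alpha}_{loc}$ rather than merely $W^{2,p}$. You instead mollify $u_\infty$ to obtain $h_\eta\in C^\infty_{loc}(B_1)\subset C^{1,\alpha}_{loc}(B_1)$ and close with a triangle inequality. Your route is more elementary and does not rely on the regularity result, which is a virtue; on the other hand, the paper's choice of $h=u_\infty$ automatically lies in the admissible affine space $W^{1,p}_g(B_1)$, whereas mollification does not preserve the boundary datum. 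Since the statement only requires $h\in C^{1,\alpha}_{loc}(B_1)$ and the estimate is in the $W^{1,p}$-distance, this mismatch is harmless, but you should say explicitly that you first extend $u_\infty\in W^{1,p}(B_1)$ to $W^{1,p}(\mathbb{R}^d)$ (the ball is an extension domain) before convolving, so that $\|u_\infty-h_\eta\|_{W^{1,p}(B_1)}\to 0$ rather than only locally. With that caveat made precise, your proof is complete, and it has the added value of not relying on the external $C^{1,\alpha}$-regularity theorem that the paper does.
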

\begin{proof}
We use a contradiction argument. Suppose the statement of the proposition is false. In this case, there exist a real number $\delta_0>0$ and sequences $(u_n)_{n\in\mathbb{N}}$ and $(\Lambda_n)_{n\in\mathbb{N}}$ such that 
\[
	\Lambda_n\to 0
\]
as $n\to\infty$,
\[
	\mathcal{G}_{n,p}[u_n]\leq \mathcal{G}_{n,p}[v]
\]
for every $v\in W^{2,p}_{\rm loc}(B_1)\cap W^{1,p}_g(B_1)$ and every $n\in\mathbb{N}$, but 
\begin{equation}\label{eq_absurdo}
	\|u_n-h\|_{W^{1,p}_g(B_1)}>\delta_0,
\end{equation}
for every $h\in C^{1,\alpha}_{\rm loc}(B_1)$, and every $n\in\mathbb{N}$.

However,
\[
	\|u_n\|_{W^{2,p}(B_1)}\leq C\left(\|g\|_{W^{2,p}(B_1)}+1\right),
\]
for some $C>0$. Hence, there exists $u_\infty\in W^{2,p}(B_1)\cap W^{1,p}_g(B_1)$ such that $u_n$ converges $u_\infty$, weakly in $W^{2,p}(B_1)$ and strongly in $W^{1,p}_g(B_1)$. That is tantamount to say that $u_\infty$ is an accumulation point for the sequence $(u_n)_{n\in\mathbb{N}}$.

Because of Theorem \ref{Theorem:Gammaconvergence}, we conclude that $u_\infty$ is a minimizer for $\mathcal{G}_{0,p}$. Previous results in the literature ensure that $u_\infty\in C^{1,\alpha}_{\rm loc}(B_1)$ \cite{AndPim_2020}. By taking $h:=u_\infty$ in \eqref{eq_absurdo}, we get a contradiction and complete the proof.
\end{proof}

\bigskip

\noindent{\bf Acknowledgements} The authors are grateful to Giovanni Bellettini for his comments on the material in this paper. This work was partially supported by the Centre for Mathematics of the University of Coimbra - UIDB/00324/2020, funded by the Portuguese Government through FCT/MCTES. JC is funded by FAPERJ-Brazil (\# E26/202.075/2020). EP is partly funded by FAPERJ-Brazil (Grant \# E26/200.002/2018), ICTP-Trieste and Instituto Serrapilheira (Grant \# 1811-25904). This study was financed in part by the Coordena\c{c}\~ao de Aperfei\c{c}oamento de Pessoal de N\'ivel Superior - Brazil (CAPES) - Finance Code 001.

\bigskip

	\bibliographystyle{plain}
	\bibliography{bibfile}

\bigskip

\noindent\textsc{Julio C. Correa-Hoyos}\\
Instituto de Matem\'atica e Estat\'istica\\Universidade do Estado do Rio de Janeiro\\20550-013, Maracan\~a, Rio de Janeiro - RJ, Brazil.\\\noindent\texttt{julio.correa@ime.uerj.br}.
\bigskip

\noindent\textsc{Edgard A. Pimentel (Corresponding Author)}\\
University of Coimbra\\
CMUC, Department of Mathematics\\ 
3001-501 Coimbra, Portugal\\
\noindent\texttt{edgard.pimentel@mat.uc.pt}

\end{document}